\documentclass[10pt]{article}
\usepackage[T1]{fontenc}
\usepackage[utf8]{inputenc}
\usepackage{amsmath,amssymb,amsthm,mathrsfs}
\usepackage[a4paper, margin=1.1in]{geometry}
\usepackage{microtype}
\usepackage{enumitem}
\usepackage{xcolor}
\usepackage{todonotes}
\usepackage{aliascnt}
\usepackage[colorlinks=true,linkcolor=blue,citecolor=red,urlcolor=cyan]{hyperref}
\usepackage[nameinlink,noabbrev]{cleveref}
\usepackage{relsize}
\usepackage[bbgreekl]{mathbbol}
\usepackage{amsfonts}
\usepackage[all]{xy}

\numberwithin{equation}{section}

\newtheorem{theorem}{Theorem}[section]

\newaliascnt{proposition}{theorem}
\newtheorem{proposition}[proposition]{Proposition}
\aliascntresetthe{proposition}

\newaliascnt{lemma}{theorem}
\newtheorem{lemma}[lemma]{Lemma}
\aliascntresetthe{lemma}

\newaliascnt{corollary}{theorem}
\newtheorem{corollary}[corollary]{Corollary}
\aliascntresetthe{corollary}

\theoremstyle{definition}
\newaliascnt{hypothesis}{theorem}

\aliascntresetthe{hypothesis}

\newaliascnt{question}{theorem}

\aliascntresetthe{question}

\newaliascnt{example}{theorem}

\aliascntresetthe{example}

\theoremstyle{remark}
\newaliascnt{remark}{theorem}
\newtheorem{remark}[remark]{Remark}
\aliascntresetthe{remark}

\crefname{theorem}{theorem}{theorems}
\Crefname{theorem}{Theorem}{Theorems}
\crefname{proposition}{proposition}{propositions}
\Crefname{proposition}{Proposition}{Propositions}
\crefname{lemma}{lemma}{lemmas}
\Crefname{lemma}{Lemma}{Lemmas}
\crefname{corollary}{corollary}{corollaries}
\Crefname{corollary}{Corollary}{Corollaries}
\crefname{hypothesis}{hypothesis}{hypotheses}
\Crefname{hypothesis}{Hypothesis}{Hypotheses}
\crefname{question}{question}{questions}
\Crefname{question}{Question}{Questions}
\crefname{example}{example}{examples}
\Crefname{example}{Example}{Examples}
\crefname{remark}{remark}{remarks}
\Crefname{remark}{Remark}{Remarks}

\newcommand{\Q}{\mathbb Q}
\newcommand{\Qp}{\mathbb Q_p}
\newcommand{\Zp}{\mathbb Z_p}
\newcommand{\OL}{\mathcal O_L}
\newcommand{\cX}{\mathscr X}
\newcommand{\Spec}{\operatorname{Spec}}
\newcommand{\Spf}{\operatorname{Spf}}
\newcommand{\Hom}{\operatorname{Hom}}
\newcommand{\End}{\operatorname{End}}
\newcommand{\Aut}{\operatorname{Aut}}
\newcommand{\Fil}{\operatorname{Fil}}

\newcommand{\GL}{\operatorname{GL}}

\newcommand{\et}{\mathrm{\acute et}}

\newcommand{\BdR}{\mathbf B_{\mathrm{dR}}}

\DeclareSymbolFontAlphabet{\mathbb}{AMSb}
\DeclareSymbolFontAlphabet{\mathbbl}{bbold}

\newcommand{\flatreal}{\mathcal E}

\allowdisplaybreaks[2]

\title{Geometric Detection and Finiteness of Crystalline Local Systems}
\author{Zhenmou Liu and Jinbang Yang}
\date{}

\begin{document}

\maketitle

\begin{abstract}
We prove a geometric detection theorem for crystalline local systems
on the generic fiber of a smooth proper scheme over the ring of
integers of a $p$-adic field.  The special-fiber crystalline
realization detects geometric absolute irreducibility and, modulo
constant rank-one twists, determines the restriction to the
geometric fundamental group.

The main ingredient is a determinant argument combining a
finite-order rank-one reduction, period comparison, and the
Pl\"ucker embedding.  As an application, using Abe--Esnault's
finiteness theorem and finiteness of stable lattices, we prove
finiteness, up to arithmetic character twists, of geometrically
absolutely irreducible crystalline local systems of fixed rank.
Coefficients may lie in any fixed finite extension of $\Qp$, and
no bound on the Hodge--Tate weights is imposed.
\end{abstract}

\section{Introduction}\label{sec:introduction}

Let $K/\Qp$ be a finite extension, with ring of integers
$\mathcal O_K$ and residue field $k$.  Let $\mathscr X$ be a
smooth proper $\mathcal O_K$-scheme with geometrically connected
fibers, and put
\[
  X=\mathscr X_K,\qquad
  X_0=\mathscr X_k,\qquad
  \mathcal X=\widehat{\mathscr X},
\]
Where $\widehat{\mathscr X}$ denotes the $p$-adic formal completion.
Fix an algebraic closure $\overline K/K$, and write
$G_K=\operatorname{Gal}(\overline K/K)$. Fix a geometric point $\bar x$ of $X$ and put $\Gamma=\pi_1^{\et}(X,\bar x)$ and $H=\pi_1^{\et}(X_{\overline K},\bar x)$.
These groups fit into the exact sequence
\begin{equation}\label{eq:fundamental}
 1\longrightarrow H\longrightarrow\Gamma
 \longrightarrow G_K\longrightarrow1.
\end{equation}

We use the notion of a \emph{crystalline local system} in the sense of
Guo--Reinecke \cite[Definition 2.31]{GR}.
Equivalently, by
\cite[Theorem A]{GR}, crystalline $\Zp$-local systems are precisely
the \'etale realizations of analytic prismatic $F$-crystals on
$\mathcal X$.

For a finite extension $L/\Qp$ with ring of integers $\OL$, an $\OL$-local system $T$ is called \emph{crystalline} if its underlying $\Zp$-local system is crystalline. An $L$-local system $V$ is called \emph{crystalline} if $V= T\otimes_{\OL}L$ for some crystalline $\OL$-local system $T$.
We say that $T$, or equivalently its rationalization $V=T\otimes_{\OL}L$, is \emph{geometrically absolutely irreducible} if $V$ remains irreducible on $H$ after extension of coefficients to $\overline{\Q}_p$. Throughout, rank means rank over $\OL$ or $L$, as appropriate.

By relative crystalline comparison, a crystalline $\mathbb Z_p$-local
system has a crystalline realization on $X_0$; see \cite[Definition~2.31]{GR}.
For a crystalline $L$-local system $V$,
its \emph{crystalline realization} on the special fiber $X_0$
carries a natural $F$-isocrystal structure with $L$-coefficients.
We denote the underlying $F$-isocrystal by $M_L(V)$. Write
\[ M_{\overline{\Q}_p}(V) :=M_L(V)\otimes_L\overline{\Q}_p . \]
Since $X_0$ is proper, we freely regard these objects as overconvergent $F$-isocrystals. The precise Frobenius and coefficient conventions will be refined in \Cref{sec:realizations}.

The main new input of this paper is that the special-fiber
crystalline realization retains substantial information about the
geometric representation. We call an $F$-isocrystal on $X_0$ \emph{constant} if it is
pulled back from $\Spec k$.

\begin{theorem}[Geometric detection and recovery]
\label{thm:geometric-detection}
Let $V,V_1,V_2$ be crystalline $L$-local systems on $X$.

\begin{enumerate}[label=\textup{(\roman*)}]
\item
If $V$ is geometrically absolutely irreducible, then
$M_{\overline{\Q}_p}(V)$ is irreducible as an overconvergent
$F$-isocrystal on $X_0$.

\item
Suppose that
$M_{\overline{\Q}_p}(V_1)\simeq
M_{\overline{\Q}_p}(V_2)\otimes C$,
for a constant rank-one overconvergent $F$-isocrystal $C$. Then
$V_1|_H\simeq V_2|_H$ over $L$.
\end{enumerate}
\end{theorem}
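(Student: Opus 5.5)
We will derive both parts from one statement: \emph{if $V_1,V_2$ are crystalline $L$-local systems and there is a nonzero morphism $C\to M_{\overline{\Q}_p}(V_1)^\vee\otimes M_{\overline{\Q}_p}(V_2)$ of overconvergent $F$-isocrystals with $C$ constant of rank one, then $\Hom_H(V_1,V_2)\otimes\overline{\Q}_p\neq0$.} The plan uses the determinant and Plücker trick from the abstract. Since $M$ is a tensor functor compatible with duals, exterior powers and direct sums, all constructions on the local-system side can be transported to the isocrystal side.

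For part (i), suppose $M_{\overline{\Q}_p}(V)$ has a proper nonzero sub-$F$-isocrystal $N$ of rank $r$. Then $\det N$ is a rank-one subobject of $\wedge^r M_{\overline{\Q}_p}(V)=M_{\overline{\Q}_p}(\wedge^r V)$. The first step is a finite-order rank-one reduction. A rank-one overconvergent $F$-isocrystal on the proper smooth $X_0$ is, by the class field theory of Abe and of Kedlaya, a constant twist of a unit-root object. That unit-root object corresponds to a finite-order character of $\pi_1(X_0)$, at least geometrically. After a finite étale cover of $X_0$, which lifts to $\mathscr X$ and does not affect geometric absolute irreducibility up to finite-index considerations that we handle via Clifford theory, we may assume $\det N\simeq C$ is constant. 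The constant object $C$ is the crystalline realization of a crystalline character $\chi$ of $G_K$ (Colmez--Fontaine, after enlarging $L$), pulled back to $X$. So we obtain a nonzero map $M(\chi)\to M(\wedge^rV)$. The second step is period comparison. Horizontal Frobenius-equivariant sections of $M(\wedge^r V\otimes\chi^{-1})$ must be shown to give $H$-invariants of $\wedge^rV\otimes\chi^{-1}$. We argue via the relative comparison: such a section is a global section of the associated prismatic $F$-crystal on $\mathcal X$ after restricting to the generic fiber. Indeed, $\Fil$ compatibility holds automatically for a line spanned by an $F$-stable section: weak admissibility of the rank-one sub forces it to lie in the correct filtration step. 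Therefore it defines a sub-$\Gamma$-representation of rank one in $\wedge^rV\otimes\chi^{-1}$, on which $H$ acts through a finite-order character.

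The third step is the Plücker argument. This rank-one sub of $\wedge^rV|_H$ is spanned by a decomposable vector, because the corresponding line in $\wedge^rM$ is $\det N$, which is decomposable, and comparison isomorphisms preserve the Plücker quadrics. Decomposable lines stable under $H$ correspond to $H$-stable $r$-dimensional subspaces of $V\otimes\overline{\Q}_p$ (after passing to the finite-index subgroup, then using that $V|_H$ is absolutely irreducible, so Clifford theory gives a contradiction unless the pieces are conjugate, which the determinant rules out). This contradicts geometric absolute irreducibility and proves (i).

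For part (ii), we apply the same argument with $r=1$ and rank $1$ inside $V_1^\vee\otimes V_2$. The isomorphism gives a decomposable sub of $\wedge^n$ of $M(V_1\oplus V_2)$: the graph of the isomorphism. Pulling it back, we obtain an $H$-stable graph in $V_1\oplus V_2$ after twisting by $\chi$, which is an isomorphism $V_1|_H\simeq V_2|_H\otimes\chi|_H=V_2|_H$, since $\chi$ is trivial on $H$. Descent from $\overline{\Q}_p$ to $L$ follows from Noether--Deuring. The main obstacle is the second step: showing that an $F$-stable horizontal line in the special-fiber realization actually lifts to a $\Gamma$-stable line on the generic fiber. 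For this we would first try to check admissibility of that line via Hodge--Tate weight considerations on the determinant.
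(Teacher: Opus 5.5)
Your overall architecture (determinant line, finite-order rank-one reduction, Pl\"ucker) is the right one. But Step 2 is a genuine gap, and the principle you invoke there is false. A Frobenius-stable horizontal section of a crystalline realization need not span a $\Gamma$-stable line. Weak admissibility of the ambient object gives only $t_H\le t_N$ on $\varphi$-stable subobjects, not the equality you would need, so nothing puts the line in ``the correct filtration step''.

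The paper's closing remark is a counterexample. Pull back to $X$ the irreducible $4$-dimensional $G_{\Qp}$-representation with $D=\bigoplus\Qp e_i$ and $\varphi(e_i)=p^{2(i-1)}e_i$. Then $e_1$ is a horizontal $\varphi$-fixed section of the realization, yet there is no $\Gamma$-stable line.

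The missing idea is to discard Frobenius and the filtration entirely and aim only at $H$:
\begin{itemize}
\item Evaluate $\det_E\mathcal N\otimes C\to\bigwedge^s_E\mathcal M\otimes C$ on the relative period sheaf $\mathbb B_{\mathrm{dR},E}$. This evaluation is exact, so exterior powers and decomposability are preserved.
\item Use the degree-zero comparison $\Hom_H(U_{\bar x},W_{\bar x})\otimes_E B_{\mathrm{dR},E}\simeq H^0(X_{\mathbb C_p,\mathrm{pro\acute et}},\mathcal Hom_E(U,W)\otimes_E\mathbb B_{\mathrm{dR},E})$. It holds for arbitrary local systems, with no de Rham hypothesis.
\end{itemize}
This yields only a decomposable element of $A\otimes_E B_{\mathrm{dR},E}$, where $A=\Hom_H(\chi_{\bar x},\bigwedge^s_E V_{E,\bar x})$, not an $E$-rational line. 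So your Pl\"ucker step also needs a descent argument. The scheme $Z=\mathbb P(A)\cap\operatorname{Gr}(s,V_{E,\bar x})$ has a $\BdR$-point, hence is a nonempty projective $E$-scheme. It therefore has a point over some finite $E'/E$, which is an $H$-stable $s$-plane.

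Two other steps also fail.

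\emph{Realizing $C$.} A constant rank-one $C$ need not be the realization of any crystalline character of $G_K$, and enlarging $L$ cannot help because it does not change slopes. For $K=\Qp$, a crystalline $E^\times$-valued character has Frobenius eigenvalue of integral valuation, while $C$ may have eigenvalue $\sqrt{-p}$. This breaks your (ii) outright:
\begin{itemize}
\item Let $V_2$ be the pullback to $X$ of the rational Tate module, with coefficients in $L=\Qp(\sqrt{-p})$, of a supersingular elliptic curve over $\Zp$ with $a_p=0$.
\item Take $C$ with Frobenius eigenvalue one of the two eigenvalues $\lambda$ of $M(V_2)$, so $\lambda^2=(-p)^{\pm1}$. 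Then $M(V_2)\otimes C$ has eigenvalues $\pm\lambda^2$ of integral valuation.
\item So $M(V_2)\otimes C$ is the realization of a suitable sum $V_1$ of two crystalline characters, and the hypothesis of (ii) holds.
\item But no crystalline $\chi$ realizes $C$. Also $V_1$ is not a character twist of $V_2$, since $V_2$ is irreducible as a $G_{\Qp}$-representation. Only $V_1|_H\simeq V_2|_H$ survives.
\end{itemize}
The paper never realizes $C$; it uses only that $\flatreal(C)$ is the trivial flat line, which suffices once Frobenius is forgotten. For (ii), it passes the isomorphism and its inverse through the Hom comparison, projects to one factor $\BdR$, and descends.

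\emph{The cover and Clifford theory.} Passing to a finite \'etale cover and appealing to Clifford theory does not give a contradiction. An $H'$-stable $r$-plane for a finite-index $H'\subseteq H$ is perfectly compatible with $V|_H$ being absolutely irreducible, and ``the determinant rules out conjugate pieces'' is not an argument. Instead, realize the finite-order object $\det_E\mathcal N\otimes C$ on $X$ itself as a finite-image crystalline character $\chi$ of $\Gamma$. Concretely, $\chi$ is the $\chi_0$-isotypic summand of $f_*E$, via Crew's unit-root correspondence and Guo--Reinecke's Theorem B. Then $H$ acts on the eigenline through $\chi|_H$, and the plane obtained is $H$-stable.
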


The first assertion contains the essential geometric argument.
A sub-$F$-isocrystal need not itself arise from a crystalline
subrepresentation.  We bypass this obstruction by passing to its determinant.  After a
constant twist, a rank-one finite-order theorem makes this determinant
finite order, and we show that the resulting rank-one object is the
crystalline realization of a finite-image character.  Degree-zero
period comparison then produces a decomposable geometric eigenline
in an exterior power of $V$, from which the Pl\"ucker embedding
recovers a proper $H$-stable subspace.  Thus geometric absolute
irreducibility of $V$ forces irreducibility of $M(V)$.
The second assertion is simpler: after forgetting Frobenius, a
constant twist has trivial connection, and degree-zero period
comparison recovers the geometric representation.

\Cref{thm:geometric-detection} allows us to bring
Abe--Esnault's finiteness theorem for overconvergent
$F$-isocrystals back to the generic fiber.
As an application, we obtain the following finiteness theorem,
with no restriction on the Hodge--Tate weights. We call a character of $G_K$, viewed as a character of
$\Gamma$ via \eqref{eq:fundamental}, an \emph{arithmetic character}.

\begin{theorem}[Finiteness]\label{thm:main}
The geometrically absolutely irreducible crystalline $\OL$-local systems of rank $r$ on $X$ have finitely many isomorphism classes modulo twists by arithmetic characters. Equivalently, they have finitely many classes for the equivalence relation
\[
 T_1\sim_{\mathrm{geom}}T_2
 \quad\Longleftrightarrow\quad T_1|_H\simeq T_2|_H
 \quad\text{over }\OL.
\]
\end{theorem}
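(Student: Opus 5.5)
The plan is to push the problem to the special fiber using \Cref{thm:geometric-detection}, apply Abe--Esnault's finiteness theorem there, and then return to the generic fiber via \Cref{thm:geometric-detection}(ii) and finiteness of stable lattices.

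\emph{Equivalence of the two formulations.} If $T_1\simeq T_2\otimes\chi$ for an arithmetic character $\chi$, then $\chi|_H$ is trivial, so $T_1|_H\simeq T_2|_H$. Conversely, suppose $T_1|_H\simeq T_2|_H$ with $V_2$ geometrically absolutely irreducible. Then $\Hom_H(T_2,T_1)\otimes_{\OL}L$ is a one-dimensional $L$-space: this follows from Schur's lemma after extending scalars to $\overline{\Q}_p$. Hence $\Hom_H(T_2,T_1)$ is a free rank-one $\OL$-module, and $\Gamma$ acts on it through $\Gamma/H=G_K$ by a character $\chi$. A generator $\phi$ is an $\OL$-isomorphism: it is nonzero, so it is an isomorphism rationally, and we may pick the generator to be an isomorphism on lattices. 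I expect this last point to need some care; if it fails it can be absorbed into the lattice-finiteness step below. Then $\phi$ gives $T_2\otimes\chi\simeq T_1$. So it suffices to bound the classes for $\sim_{\mathrm{geom}}$.

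\emph{Reduction to the special fiber.} Let $T$ be such a local system, with $V=T\otimes L$. By \Cref{thm:geometric-detection}(i), $M_{\overline{\Q}_p}(V)$ is an irreducible overconvergent $F$-isocrystal of rank $r$ on $X_0$. Its coefficients, however, lie in the fixed field $L$. Abe--Esnault's finiteness theorem states that irreducible overconvergent $F$-isocrystals of rank $r$ on $X_0$ with coefficients in a fixed finite extension (after the usual normalization) are finite in number up to twist by constant rank-one $F$-isocrystals. To apply it in that form, I will twist by a constant rank-one object to normalize the determinant to finite order, which is allowed since twists are harmless. I expect the main obstacle to be matching Abe--Esnault's hypotheses: the $L$-linear Frobenius structure (the $a$-th power of Frobenius, where $q=p^a$ is the size of the residue field) and the coefficient field. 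I will use the conventions of \Cref{sec:realizations} for this. The result is a finite set $\{M_1,\dots,M_N\}$ such that every $M_{\overline{\Q}_p}(V)$ is isomorphic to some $M_i\otimes C$ with $C$ constant of rank one.

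\emph{Back to the generic fiber.} If $M_{\overline{\Q}_p}(V_1)$ and $M_{\overline{\Q}_p}(V_2)$ land in the same class $M_i$, then they differ by a constant rank-one twist. By \Cref{thm:geometric-detection}(ii), this gives $V_1|_H\simeq V_2|_H$ over $L$. So the rational geometric restrictions $V|_H$ take at most $N$ values.

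It remains to pass from $L$ to $\OL$. Fix a representative $V_i|_H$. Any $T$ in its class gives an $H$-stable lattice in $V_i|_H$, up to the isomorphism. Since $V_i|_H$ is absolutely irreducible and $H$ is compact, the $H$-stable lattices up to homothety form a finite set: this is the standard finiteness of stable lattices, proved via boundedness in the Bruhat--Tits building. Homothety does not change the isomorphism class. Hence each class $i$ contributes finitely many $\OL$-isomorphism classes of $T|_H$, which proves finiteness for $\sim_{\mathrm{geom}}$ and therefore the theorem.
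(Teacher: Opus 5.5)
Your proposal is correct and follows the paper's proof: detection~(i) plus Abe--Esnault gives finitely many constant-twist classes, detection~(ii) turns each class into a single rational geometric restriction, and finiteness of stable lattices gives the integral statement. The only reorganization is that you count $H$-stable lattices in $V_i|_H$ directly and then convert $\sim_{\mathrm{geom}}$ into arithmetic twists, whereas the paper first upgrades to $V\simeq V_j\otimes\chi$ via \Cref{lem:twists} and counts $\Gamma$-stable lattices; the two routes are equivalent, since \Cref{prop:lattices} only uses the $H$-action.

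The point you flagged in the equivalence step resolves immediately, exactly as in \Cref{cor:integral-twists}. The given integral $H$-isomorphism $A\colon T_2\to T_1$ already generates the rank-one module $\Hom_H(T_2,T_1)$, because $A=u\phi$ with $u\in\varpi\OL$ would force $A(T_2)\subseteq\varpi T_1$. Hence $\chi$ is automatically $\OL^\times$-valued and $A$ itself is the twisted isomorphism.
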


Fixing the determinant leaves only finitely many arithmetic twists and gives the following Shafarevich-type consequence.

\begin{corollary} \label{cor:fixed-det}
    There are finitely many isomorphism classes of geometrically absolutely irreducible crystalline $\mathcal{O}_L$-local systems of rank $r$ on $X$ with fixed determinant.
\end{corollary}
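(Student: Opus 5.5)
The plan is to deduce the corollary from \Cref{thm:main}: within a single $\sim_{\mathrm{geom}}$-class, fixing the determinant leaves only finitely many arithmetic twists. By \Cref{thm:main}, the geometrically absolutely irreducible crystalline $\OL$-local systems of rank $r$ on $X$ fall into finitely many classes $T_1,\dots,T_N$ modulo twists by arithmetic characters. So it suffices to fix one such $T=T_i$ and show that only finitely many isomorphism classes of the form $T\otimes\chi$ have a prescribed determinant $\delta$.

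\textbf{Step 1: reduce to bounded-order characters.} Let $\chi\colon G_K\to\OL^\times$ be a character, viewed on $\Gamma$ through \eqref{eq:fundamental}. Then
\[
\det(T\otimes\chi)=\det(T)\otimes\chi^{r}.
\]
Suppose $\det(T\otimes\chi)\simeq\det(T\otimes\chi')\simeq\delta$. Then $\chi^r=\chi'^r$ as characters of $\Gamma$. Since $\Gamma\to G_K$ is surjective, this equality already holds as characters of $G_K$. Hence $\psi:=\chi'\chi^{-1}$ satisfies $\psi^r=1$.

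\textbf{Step 2: count these characters.} Such $\psi$ take values in the $r$-th roots of unity $\mu_r(\OL)$, a finite group. They factor through the maximal abelian quotient of $G_K$ of exponent dividing $r$. By local class field theory this quotient is $K^\times/(K^\times)^r$, which is finite since $K$ is a $p$-adic field. Therefore there are only finitely many such $\psi$.

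\textbf{Step 3: conclude.} Any two members of the class of $T$ with determinant $\delta$ differ by twisting by one of these finitely many $\psi$. So each of the $N$ classes contributes finitely many isomorphism classes with determinant $\delta$, and the corollary follows.

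One point needs checking: a twist $T\otimes\chi$ should remain crystalline, or at least we only count those twists that happen to be crystalline. Since the corollary restricts to crystalline objects anyway, we only need finiteness among all twists, so this causes no difficulty. There is no real obstacle; the only substantive input beyond \Cref{thm:main} is the finiteness of $K^\times/(K^\times)^r$.
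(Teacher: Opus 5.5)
Your proposal is correct and follows essentially the same route as the paper: reduce via \Cref{thm:main} to a single arithmetic-twist class, observe that a fixed determinant forces the relevant twisting character to satisfy $\psi^r=1$, and use local class field theory (finiteness of $K^\times/(K^\times)^r$) to get finitely many such characters. The only cosmetic difference is that the paper picks a representative $T_j$ already having the prescribed determinant, whereas you compare two members $T\otimes\chi$ and $T\otimes\chi'$ of the class. This amounts to the same argument.
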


To prove \Cref{thm:main}, we apply Abe--Esnault's
finiteness theorem to the irreducible special-fiber realizations
provided by \Cref{thm:geometric-detection}\textup{(i)}.  This yields finitely many
possibilities modulo constant rank-one twists.
Part~\textup{(ii)} of \Cref{thm:geometric-detection} identifies
realizations in the same twist class with the same geometric
restriction. Finally,
finiteness of stable integral lattices upgrades the resulting
rational finiteness statement to the integral one in
\Cref{thm:main}.

\begin{remark}[Comparison with previous work]
The finiteness problem considered here is closely related to the
work of Krishnamoorthy--Yang--Zuo \cite{KYZ1,KYZ2}, where it is
studied through the Fontaine--Faltings framework.  A central
difference is the mechanism relating the generic and special fibers:
here geometric irreducibility is detected by
\Cref{thm:geometric-detection}\textup{(i)}, rather than through the existence of
a Fontaine--Faltings lattice.

Moreover, \cite[Theorem~1.2]{KYZ1}
assumes geometric absolute irreducibility of the residual
representation and $r\leq p-2$, while the version of
\cite[Theorem~1.2]{KYZ2} considered here works over an unramified
base with unramified coefficients and requires the Hodge--Tate
weights to lie in an interval $[a,a+p-1]$.  In the case without
boundary, our result allows ramified base and coefficient fields and
places no restriction on the Hodge--Tate weights.

The irreducibility comparison in \cite[Lemma~2.3]{KYZ2} is related
to \Cref{thm:geometric-detection}\textup{(i)}, but our proof instead
uses finite-order determinant twists, period comparison, and the
Pl\"ucker embedding.  We do not treat logarithmic boundary
conditions here.
\end{remark}

The paper is organized as follows.
\Cref{sec:realizations} fixes the crystalline realization and
Frobenius conventions.
In \Cref{sec:detection} we prove the geometric detection and recovery
theorem, the main new input of the paper.
\Cref{sec:lattices} establishes the representation-theoretic input
needed to pass from rational to integral finiteness.
Finally, \Cref{sec:finiteness} combines these results with
Abe--Esnault's theorem to prove \Cref{thm:main} and its
fixed-determinant consequence.

\subsection*{Acknowledgments}

The authors acknowledge the use of LLM for
assistance with language editing, exposition, and the organization
of parts of the manuscript.
The authors gratefully acknowledge the financial support provided by the Key Program (Grant No. 12331002) and the International Collaboration Fund (Grant No. W2441003) of the National Natural Science Foundation of China, and the Fundamental Research Funds for the Central Universities.

\section{Crystalline realizations and coefficient conventions}
\label{sec:realizations}

This section fixes the realization, coefficient, and Frobenius
conventions used in the proof of
\Cref{thm:geometric-detection}.
The crystalline realization
of Guo--Reinecke is naturally $\Qp$-linear, whereas later we also
use finite extensions of $\Qp$, as well as the
$\overline{\Q}_p$-coefficient categories occurring in
\cite{Abe18,AE}.  We first explain how these conventions are related.

\paragraph{Finite coefficient fields.}
Let $L/\Qp$ be a finite extension.  When a construction of
\cite{GR} is applied to an $L$-local system, we first regard the
latter as a $\Qp$-local system.  The $L$-action on the local
system then induces, by functoriality, a commuting action
\[
  L\longrightarrow \End(\mathcal M)
\]
on every realization $\mathcal M$ that we use.  Thus an object with
$L$-coefficients on the crystalline side means the corresponding
$\Qp$-linear object together with such a commuting $L$-action.
The same convention is used integrally with
$\mathcal O_L$-coefficients.

If $\mathcal N$ is a convergent isocrystal on $X_0$ with
$L$-coefficients, we write
\[
  \flatreal(\mathcal N)
\]
for its associated flat realization on $X$, obtained by crystalline
evaluation after forgetting Frobenius.  It is a locally free
$\mathcal O_X\otimes_{\Qp}L$-module, equipped with an integrable
$L$-linear connection.

For a crystalline $L$-local system $V$, we refine the notation introduced in the Introduction. Let $M_L(V)$ denote
its special-fiber crystalline realization with the induced
$L$-action, and put
\[
  \mathcal E_L(V):=\flatreal\bigl(M_L(V)\bigr).
\]
Thus we write
\[
  \bigl(M_L(V),\varphi,\Fil^\bullet\mathcal E_L(V)\bigr)
\]
for the filtered crystalline realization of $V$.  Here
$\varphi$ denotes Frobenius.
We abbreviate $M_L(V)$ to $M(V)$, and $\mathcal E_L(V)$ is a locally
free $\mathcal O_X\otimes_{\Qp}L$-module of rank $\dim_L V$
with integrable connection.

We use algebraic flat bundles and their analytifications
interchangeably when period sheaves are involved. Flat bundles are analytified when they occur in a period-sheaf formula. Fix a completed algebraic closure $\mathbb C_p$ of $K$ and write $X_{\mathbb C_p}=X\times_K\mathbb C_p$. Geometric fibers of local systems at different base points are identified through a choice of \'etale path; the assertions of irreducibility and isomorphism do not depend on that choice.

All tensor operations in the coefficient category are taken over
$L$.  In particular, determinants and exterior powers of an
$L$-coefficient object are formed over $L$; on flat realizations
they are taken over $\mathcal O_X\otimes_{\Qp}L$.  The underlying
$\Qp$-object attached to an $L$-object of coefficient rank $r$
has rank $[L:\Qp]r$.

\paragraph{Algebraic $p$-adic coefficients.}
Objects with $\overline{\Q}_p$-coefficients are understood through
finite coefficient fields.  More precisely, we use the
$2$-inductive limit of the above coefficient categories over finite
extensions $E/\Qp$ contained in $\overline{\Q}_p$.  Consequently,
every object, morphism, subobject, or isomorphism with
$\overline{\Q}_p$-coefficients appearing below is defined over some
finite extension of $\Qp$, and finitely many such data may be
simultaneously descended to a common finite coefficient field.

If $V$ is a crystalline $L$-local system and $E/L$ is finite,
we write
\[
  V_E=V\otimes_L E,
  \qquad
  M_E(V_E)=M_L(V)\otimes_L E
\]
for extension of coefficients in this sense.  Likewise,
\[
  M_{\overline{\Q}_p}(V)
  :=M_L(V)\otimes_L\overline{\Q}_p
\]
is understood in the above $2$-inductive-limit category.

\paragraph{Frobenius and coefficient conventions.}
Put $K_0=W(k)[1/p]$, $q=|k|=p^f$
and let $\sigma$ denote the Witt-vector Frobenius on $K_0$.
Write $F^*$ for pullback by the absolute $p$-power Frobenius,
including the induced $\sigma$-action on $K_0$.  In the
Guo--Reinecke realization, an object
$\mathcal M=M_E(V)$ carries a Frobenius isomorphism
$\varphi:F^*\mathcal M\xrightarrow{\sim}\mathcal M$
commuting with its $E$-action.

The coefficient convention used in \cite{Abe18,AE} is presented
differently.  There one works with the $K_0$-linear category and
with the $q$-power Frobenius, and coefficients are introduced in
the corresponding $K_0$-linear coefficient category.  The next
lemma identifies this convention with the one above after choosing
an embedding of $K_0$ into a sufficiently large finite coefficient
field.

\begin{lemma}[Change of Frobenius and coefficient conventions]
\label{lem:frobenius-convention}
Let $E/\Qp$ be a finite extension containing the image of an
embedding
\[
  \iota_0:K_0\hookrightarrow E.
\]
Consider the following categories on $X_0$:
\begin{enumerate}[label=\textup{(\roman*)}]
\item
$p$-Frobenius isocrystals with a commuting
$\Qp$-algebra action of $E$;

\item
$q$-Frobenius isocrystals with a commuting
$K_0$-algebra action of $E$, where $E$ is viewed as a
$K_0$-algebra through $\iota_0$.
\end{enumerate}
Projection to the $\iota_0$-component, equipped with the induced
$f$-fold Frobenius, defines an exact tensor equivalence between
\textup{(i)} and \textup{(ii)}, in both the convergent and
overconvergent settings.

This equivalence preserves coefficient rank, subobjects,
irreducibility, duals, determinants, exterior powers, constant
objects, and finite tensor order.  It is compatible with finite
extension of the coefficient field.  Passing to the
$2$-inductive limit therefore identifies the corresponding
$\overline{\Q}_p$-coefficient categories.
\end{lemma}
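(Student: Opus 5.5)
The plan is the standard ``restriction of scalars along an étale algebra'' argument, run in both directions.

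\emph{Splitting the coefficients.} Since $E\supseteq\iota_0(K_0)$ and $K_0/\Qp$ is unramified of degree $f$ with Galois group generated by $\sigma$, there is a product decomposition
\[
  K_0\otimes_{\Qp}E\;\simeq\;\prod_{i\in\mathbb Z/f}E,\qquad a\otimes e\mapsto \bigl(\iota_0(\sigma^{i}a)e\bigr)_i .
\]
Let $e_i$ be the corresponding orthogonal idempotents, with $e_0$ the $\iota_0$-component. An object $\mathcal M$ of (i) is an isocrystal over $K_0$ with a commuting $E$-action, hence a module over $\mathcal O\otimes_{\Qp}E$ whose structure sheaf of constants is $K_0\otimes_{\Qp}E$. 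It therefore splits as $\mathcal M=\bigoplus_i e_i\mathcal M$. The Frobenius $\varphi:F^*\mathcal M\to\mathcal M$ is $\sigma$-semilinear on $K_0$ and $E$-linear, so it carries $F^*(e_i\mathcal M)$ isomorphically onto $e_{i+1}\mathcal M$; the direction of the shift is fixed by the convention $F^*$, and this is the one routine check to carry out. Consequently
\[
  \varphi^f:=\varphi\circ F^*\varphi\circ\cdots\circ F^{*(f-1)}\varphi
\]
restricts to a $q$-Frobenius $(F^f)^*(e_0\mathcal M)\xrightarrow{\sim}e_0\mathcal M$. This Frobenius is $K_0$-linear and commutes with the $E$-action, and $K_0$ acts on $e_0\mathcal M$ through $\iota_0$. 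This defines the functor $\Phi(\mathcal M)=(e_0\mathcal M,\varphi^f)$, which is visibly exact and compatible with tensor products, since $e_0$ is multiplicative for $\otimes_{\mathcal O\otimes E}$.

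\emph{Quasi-inverse.} Given $(\mathcal N,\psi)$ in (ii), set
\[
  \Psi(\mathcal N)=\bigoplus_{i=0}^{f-1}F^{*i}\mathcal N .
\]
Frobenius shifts the summand in position $i$ to position $i+1$ by the identity for $i<f-1$, and maps position $f-1$ to position $0$ by $\psi$. One checks directly that $\Phi\Psi\simeq\mathrm{id}$. For $\Psi\Phi\simeq\mathrm{id}$, use the isomorphisms $\varphi^i:F^{*i}(e_0\mathcal M)\simeq e_i\mathcal M$. Both constructions are local and compatible with the convergent and overconvergent structures, because $F^*$ preserves each setting and a finite direct summand of an overconvergent isocrystal is overconvergent. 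The same formulas therefore apply in both settings. Full faithfulness follows from the quasi-inverse, since morphisms in (i) are determined by their $e_0$-component, as $\varphi$ transports it to the other components.

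\emph{Preservation properties.} Each property follows formally from the equivalence being an exact tensor equivalence. Coefficient rank is preserved because each $e_i\mathcal M$ has the same $E$-rank, being a Frobenius pullback of $e_0\mathcal M$. Subobjects and irreducibility are preserved because the equivalence is exact on abelian categories. Duals, determinants and exterior powers are preserved because these are tensor constructions over the coefficients and $e_0$ commutes with $\otimes_E$. Finite tensor order is a tensor property. Constant objects are preserved because both $\Phi$ and $\Psi$ commute with pullback from $\Spec k$, where the same decomposition holds. Compatibility with a finite extension $E'/E$ comes from the fact that the idempotent $e_0$ for $E'$ refines the one for $E$ under $\iota_0$. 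Hence the equivalences form a compatible system, and passing to the $2$-colimit identifies the $\overline{\Q}_p$-categories.

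The main obstacle is bookkeeping rather than mathematics. Two points need care: checking the index shift of $\varphi$ between idempotents, so that $\varphi^f$ really lands in $e_0\mathcal M$, and checking that the identification of constant objects is compatible with the twists by $\sigma$. I would verify both first on the point $\Spec k$, where everything reduces to $\sigma$-semilinear algebra over $K_0\otimes E$.
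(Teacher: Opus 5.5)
Your proposal is correct and follows the paper's proof essentially step for step: the idempotent splitting of $K_0\otimes_{\Qp}E$, the cyclic $f$-fold Frobenius composite on the $\iota_0$-component, the quasi-inverse $\bigoplus_j (F^*)^j\mathcal N$ closed up by the $q$-Frobenius on the last summand, and formal transfer of the listed properties. On the index check you flagged: with your labeling $a\otimes e\mapsto(\iota_0(\sigma^i a)e)_i$, semilinearity gives $\varphi\colon F^*(e_i\mathcal M)\xrightarrow{\sim}e_{i-1}\mathcal M$ rather than $e_{i+1}\mathcal M$ (the paper labels by $\iota_0\circ\sigma^{-j}$ to get a $+1$ shift), which is harmless because the $f$-fold composite still returns to $e_0\mathcal M$.
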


\begin{proof}
Let $\mathcal M$ be an object of \textup{(i)}.  The primitive
idempotents of
$  K_0\otimes_{\Qp}E
  \simeq
  \prod_{\iota:K_0\hookrightarrow E} E$
give a decomposition
$\mathcal M=\bigoplus_{\iota}\mathcal M_{\iota}.$
Put $\iota_j=\iota_0\circ\sigma^{-j}$, with indices modulo $f$.
The semilinearity of Frobenius gives isomorphisms
$  \varphi_j:
  F^*\mathcal M_{\iota_j}
  \xrightarrow{\sim}
  \mathcal M_{\iota_{j+1}}.$
Their cyclic composite gives
$  \Phi:(F^*)^f\mathcal M_{\iota_0}
  \xrightarrow{\sim}
  \mathcal M_{\iota_0}.$
On the component $\mathcal M_{\iota_0}$, the $E$-action restricts
to the $K_0$-action through $\iota_0$.  Hence
$(\mathcal M_{\iota_0},\Phi)$ is an object of \textup{(ii)}.

Conversely, let $(\mathcal N,\Phi)$ be an object of
\textup{(ii)}.  Form $\bigoplus_{j=0}^{f-1}(F^*)^j\mathcal N,$
with the transported $E$-actions.  The $j$-th summand has
$K_0$-type $\iota_j$.  Define the $p$-Frobenius by the natural
identifications between successive Frobenius pullbacks and by
$\Phi$ on the last summand.  This produces an object of
\textup{(i)}.  The maps $\varphi_j$ show that the two constructions
are quasi-inverse, and the same argument applies to morphisms.

Projection to an idempotent component is exact.  Tensor products in
the coefficient category are componentwise, so the equivalence is
tensor compatible and therefore preserves duals, determinants, and
exterior powers.  It identifies subobjects and hence preserves
irreducibility.  If the coefficient rank is $r$, the underlying
$K_0$-ranks on the two sides are respectively
$[E:\Qp]r $  and $ [E:K_0]r$,
so the coefficient rank is preserved.  The construction commutes
with pullback from $\Spec k$, and therefore preserves constant
objects; tensor compatibility also preserves finite tensor order.
All constructions commute with finite extension of $E$, which
gives the final assertion after passing to the $2$-inductive limit.
\end{proof}

We shall freely enlarge the finite coefficient field and use
\Cref{lem:frobenius-convention} to pass between the two conventions.
Objects and morphisms with
$\overline{\Q}_p$-coefficients are always understood after descent
to a finite coefficient extension.

Finally, if $C$ is a constant rank-one $F$-isocrystal with
$E$-coefficients, then its flat realization is trivial:
\[
  \flatreal(C)
  \simeq
  \bigl(
    \mathcal O_X\otimes_{\Qp}E,
    \mathrm d\otimes 1
  \bigr).
\]
Only this triviality of the connection will be used below; we do not
assume that $C$ itself lies in the essential image of the
crystalline realization.

\section{Geometric detection and recovery}
\label{sec:detection}

We now prove \Cref{thm:geometric-detection}.  The proof uses three
ingredients: a degree-zero geometric Hom comparison, a finite-order
realization for rank-one objects, and an exact period realization
for subisocrystals.

\paragraph{Degree-zero period comparison.}
We use $\mathbf B_{\mathrm{dR}}^+$ and
$\mathbb B_{\mathrm{dR}}^+$ for the absolute and relative
positive de Rham period rings/sheaves \cite{GR,SchErr}.
For a finite coefficient extension $E/\Qp$, put
\[
 B_{\mathrm{dR},E}=\BdR\otimes_{\Qp}E,\qquad
 \mathbb B_{\mathrm{dR},E}=\mathbb B_{\mathrm{dR}}\otimes_{\Qp}E.
\]
The bold symbol denotes Fontaine's field, and the blackboard-bold symbol denotes the relative period sheaf.
Let $t\in\mathbf B_{\mathrm{dR}}^+$ denote Fontaine's usual period.

\begin{lemma}[Localization in degree zero]\label{lem:localization}
Let $Y$ be a quasi-compact rigid space over $\mathbb C_p$, and let $\mathcal G$ be a $t$-torsion-free sheaf of $\mathbb B_{\mathrm{dR}}^+$-modules on $Y_{\mathrm{pro\acute et}}$. Then the natural map
\[
 H^0(Y_{\mathrm{pro\acute et}},\mathcal G)[1/t]
 \xrightarrow{\sim}
 H^0(Y_{\mathrm{pro\acute et}},\mathcal G[1/t])
\]
is an isomorphism.
\end{lemma}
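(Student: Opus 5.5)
The plan is to write $\mathcal G[1/t]=\varinjlim_n t^{-n}\mathcal G$ and compare both sides through this colimit. Since $\mathcal G$ is $t$-torsion-free, multiplication by $t^n$ gives an isomorphism $\mathcal G\xrightarrow{\sim} t^{-n}\mathcal G$ inside $\mathcal G[1/t]$. The transition maps of the directed system $\mathcal G\xrightarrow{t}\mathcal G\xrightarrow{t}\cdots$ are injective, so $\mathcal G[1/t]$ is the filtered colimit of this system in the category of sheaves; the sheafification of the presheaf colimit agrees with the colimit here. The left side is, by definition, $\varinjlim_n H^0(Y_{\mathrm{pro\acute et}},\mathcal G)$ along multiplication by $t$. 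The statement therefore reduces to showing that $H^0(Y_{\mathrm{pro\acute et}},-)$ commutes with this particular filtered colimit.

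For \emph{injectivity}, suppose $s\in H^0(\mathcal G)$ maps to zero in $H^0(\mathcal G[1/t])$. Then $s$ is locally killed by some power of $t$. Because $\mathcal G$ is $t$-torsion-free, $s$ vanishes locally and hence globally. Note that quasi-compactness is not needed for this step.

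\emph{Surjectivity} is the real content and the main obstacle. Take a global section $u$ of $\mathcal G[1/t]$. Using quasi-compactness of $Y$, I will choose a finite cover by quasi-compact objects $U_i$ of $Y_{\mathrm{pro\acute et}}$ on which $u|_{U_i}=t^{-n_i}s_i$ with $s_i\in\mathcal G(U_i)$. Here I first use the standard fact that quasi-compact objects, for instance affinoid perfectoids over quasi-compact opens, form a basis. Set $n=\max n_i$ and replace $s_i$ by $t^{n-n_i}s_i$, so that $t^n u|_{U_i}=s_i$ for all $i$.

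On overlaps, $s_i-s_j$ restricts to zero in $\mathcal G[1/t](U_i\times_Y U_j)$. By the injectivity argument applied on $U_i\times_Y U_j$, using torsion-freeness, this gives $s_i=s_j$ already in $\mathcal G$. The sheaf property then glues the $s_i$ to $s\in H^0(\mathcal G)$ with $s=t^nu$, so $u=t^{-n}s$ lies in the image.

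The delicate point is justifying that a global section of the sheafified colimit is, on a finite quasi-compact cover, of the form $t^{-n_i}s_i$. A section of the sheafification is only locally a section of the presheaf colimit, and each local section involves a single denominator $t^{-n}$. Quasi-compactness of $Y$ in the pro-étale site, as a coherent topos in which finite covers suffice, gives a finite refinement, and hence a uniform bound on $n$. In effect, I would invoke that $H^0$ on a quasi-compact object commutes with filtered colimits of sheaves with injective transition maps. Alternatively, I would carry out the elementary argument above, which needs only $H^0$ on quasi-compact quasi-separated objects together with the finiteness of covers.
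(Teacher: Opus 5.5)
Your proposal is correct and follows essentially the same argument as the paper. Quasi-compactness gives a finite refinement on which $u$ has local representatives $t^{-n_i}s_i$, and taking $n=\max n_i$ bounds the denominators. Torsion-freeness makes $\mathcal G\to\mathcal G[1/t]$ injective, so the $t^{n-n_i}s_i$ agree on overlaps and glue; the same injectivity gives injectivity on global sections. Your colimit framing is only a repackaging of this argument.
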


\begin{proof}
We use the quasi-compact basis of the pro-\'etale site, on which covering families admit finite refinements; see \cite[Section 3]{Sch} with the correction in \cite{SchErr}. A section on the right is locally a fraction $g_i/t^{n_i}$. Quasi-compactness provides finitely many such local representatives. With $n=\max_i n_i$, the sections $t^{n-n_i}g_i$ have equal images in $\mathcal G[1/t]$ on overlaps. The map $\mathcal G\to\mathcal G[1/t]$ is injective by $t$-torsion-freeness, so these sections agree in $\mathcal G$ and glue. This proves surjectivity; the same injectivity proves injectivity on global sections.
\end{proof}

We next compare geometric morphisms of $E$-local systems with global morphisms after tensoring with the de Rham period sheaf.
The preceding localization lemma allows us to pass from
$\mathbb B_{\mathrm{dR},E}^+$ to $\mathbb B_{\mathrm{dR},E}$
on global sections.

\begin{lemma}[Geometric Hom comparison]\label{lem:hom-period}
For $E$-local systems $U,W$ on $X$, there is a natural isomorphism
\begin{equation}\label{eq:hom-period}
\Hom_H(U_{\bar x},W_{\bar x})\otimes_E B_{\mathrm{dR},E} \xrightarrow{\sim}
 H^0\bigl(X_{\mathbb C_p,\mathrm{pro\acute et}},
 \mathcal Hom_E(U,W)\otimes_E\mathbb B_{\mathrm{dR},E}\bigr).
\end{equation}
It respects composition, identity maps, tensor operations, and evaluation at a $\mathbb C_p$-valued point after identifying local-system fibers by an \'etale path.
\end{lemma}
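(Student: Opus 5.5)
Put $\mathcal L:=\mathcal Hom_E(U,W)$. This is an $E$-local system on $X$, and its fibre at $\bar x$ is $\Hom_E(U_{\bar x},W_{\bar x})$, so $\Hom_H(U_{\bar x},W_{\bar x})=H^0(X_{\mathbb C_p,\mathrm{pro\acute et}},\mathcal L)$. Here we use that $X_{\mathbb C_p}$ is connected, since $X$ is geometrically connected and proper, and that the pro-étale fundamental group of $X_{\mathbb C_p}$ surjects onto, indeed equals, $H$ for the profinite part. The statement therefore reduces to the assertion
\[
 H^0(X_{\mathbb C_p},\mathcal L)\otimes_E B_{\mathrm{dR},E}\xrightarrow{\sim}H^0(X_{\mathbb C_p},\mathcal L\otimes_E\mathbb B_{\mathrm{dR},E}).
\]
The map is the obvious one: a global section of $\mathcal L$ times a constant of $B_{\mathrm{dR},E}$. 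Compatibility with composition, identities, tensor operations and evaluation at points is then formal from the naturality of this map in $\mathcal L$.

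To prove the displayed isomorphism, I would first treat the positive version and then invert $t$ using \Cref{lem:localization}. The sheaf $\mathcal L\otimes\mathbb B^+_{\mathrm{dR},E}$ is locally free over $\mathbb B^+_{\mathrm{dR},E}$, hence $t$-torsion-free. The lemma therefore gives $H^0(\mathcal L\otimes\mathbb B_{\mathrm{dR},E})=H^0(\mathcal L\otimes\mathbb B^+_{\mathrm{dR},E})[1/t]$, and likewise $B_{\mathrm{dR},E}=B^+_{\mathrm{dR},E}[1/t]$. Next, $\mathbb B^+_{\mathrm{dR}}=\varprojlim_n \mathbb B^+_{\mathrm{dR}}/t^n$, and $H^0$ commutes with inverse limits. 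We may also reduce to $\mathcal L$ being a $\Zp$-lattice-rationalized object. So it suffices to show
\[
 H^0(\mathcal L\otimes\mathbb B^+_{\mathrm{dR}}/t^n)=H^0(\mathcal L)\otimes B^+_{\mathrm{dR}}/t^n
\]
compatibly in $n$. Injectivity comes for free from the inverse limit.

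For this, I would use induction on $n$ together with the graded pieces $t^n\mathbb B^+_{\mathrm{dR}}/t^{n+1}\simeq\widehat{\mathcal O}_X(n)$. On $X_{\mathbb C_p}$ the Tate twist is trivial after choosing $t$, so the key input is
\[
 H^0(X_{\mathbb C_p},\mathcal L\otimes\widehat{\mathcal O})=H^0(\mathcal L)\otimes_{\Qp}\mathbb C_p.
\]
This is the degree-zero case of Scholze's primitive comparison theorem, applied to a $\Zp$-lattice of $\mathcal L$ with properness of $X$: the almost isomorphism $H^i(\mathbb L/p^m)\otimes\mathcal O_{\mathbb C_p}/p^m\approx H^i(\mathbb L\otimes\mathcal O^+/p^m)$, followed by passing to the limit and inverting $p$. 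For the induction step I would take the long exact sequence for $0\to\mathcal L\otimes\widehat{\mathcal O}(n)\to\mathcal L\otimes\mathbb B^+/t^{n+1}\to\mathcal L\otimes\mathbb B^+/t^n\to0$. To get surjectivity onto $H^0$ of the quotient, I would construct the lifts directly rather than use $H^1$ vanishing: sections of the form $s\otimes b$ with $s\in H^0(\mathcal L)$ lift to $s\otimes\tilde b$. A five-lemma comparison with the constant-coefficient sequence $0\to \mathbb C_p(n)\to B^+/t^{n+1}\to B^+/t^n\to 0$, tensored with $H^0(\mathcal L)$, then gives the isomorphism at level $n+1$.

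The main obstacle is the base case: primitive comparison in degree zero, applied with rational coefficients to a local system that is only given rationally. Passing from the almost-integral statement to $\otimes\mathbb C_p$ needs the usual bounded-torsion argument. Naturality of evaluation at $\mathbb C_p$-points then holds because the comparison map commutes with restriction to $\Spa(\mathbb C_p)$, where it is the identity.
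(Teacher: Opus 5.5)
Your proposal is correct and follows the same skeleton as the paper's proof. You identify $\Hom_H(U_{\bar x},W_{\bar x})$ with $H^0$ of the local system $\mathcal Hom_E(U,W)$ on the connected space $X_{\mathbb C_p}$, prove the $\mathbb B^+_{\mathrm{dR}}$-version of the comparison, invert $t$ via \Cref{lem:localization}, and read off the compatibilities from the description ``invariant section times period scalar.'' The only difference is that the paper cites the degree-zero case of Scholze's $\mathbb B^+_{\mathrm{dR}}$-comparison (his Theorem 8.4), whereas you re-derive it from the degree-zero $\widehat{\mathcal O}$-primitive comparison by $t$-adic d\'evissage. That derivation is valid, because in degree zero the four-lemma gives surjectivity at each level $n$ without any $H^1$ input.
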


\begin{proof}
Put $\mathcal F=\mathcal Hom_E(U,W)$ and choose a stable integral lattice in its underlying $\Qp$-local system. The first comparison in \cite[Theorem 8.4]{Sch}, in degree zero and with the corrections of \cite{SchErr}, gives
\[
 H^0(X_{\overline K,\et},\mathcal F)\otimes_{\Qp}
 \mathbf B_{\mathrm{dR}}^+
 \simeq
 H^0(X_{\mathbb C_p,\mathrm{pro\acute et}},
             \mathcal F\otimes_{\Qp}\mathbb B_{\mathrm{dR}}^+).
\]
This comparison does not require $\mathcal F$ to be de Rham. The sheaf on the right is $t$-torsion-free, so \Cref{lem:localization} permits inversion of $t$. Geometric \'etale $H^0$ is $\mathcal F_{\bar x}^H$, using invariance of finite \'etale covers under analytification and extension from $\overline K$ to $\mathbb C_p$. Naturality retains the $E$-action, and
\[
 \mathcal F\otimes_{\Qp}\mathbb B_{\mathrm{dR}}
 \simeq\mathcal F\otimes_E\mathbb B_{\mathrm{dR},E}.
\]
In degree zero the map sends an invariant section and a period scalar to their product. This description proves compatibility with composition and tensor operations. Pulling such a product back to a specified $\mathbb C_p$-valued point gives its fiber value multiplied by the same scalar, proving the evaluation assertion.
\end{proof}

\paragraph{Finite-order rank-one objects.}
The next lemma is the only place where we need to realize a
rank-one $F$-isocrystal by an actual local system.
The constant twist itself need not lie in the essential image of
the crystalline realization; only the resulting finite-order object
will be realized.

\begin{lemma}[Finite-order realization]\label{lem:finite-order}
Let $\mathcal D$ be a rank-one overconvergent $F$-isocrystal
with $\overline{\Q}_p$-coefficients, defined over some finite
extension of $\Qp$. After a finite coefficient extension $E$, there are a constant rank-one object $C$ and a finite-image crystalline $E$-local system $\chi$ on $X$ such that
\[
 M_E(\chi)\simeq\mathcal D\otimes C.
\]
The underlying filtered flat bundle of $\chi$ has filtration concentrated in degree zero.
\end{lemma}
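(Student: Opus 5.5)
First, I will make $\mathcal D$ of finite order after a constant twist. The paper signals this with its ``rank-one finite-order theorem''. I will use the result of Abe (and Abe--Esnault), building on Deligne's conjecture for rank one and class field theory. It says that a rank-one overconvergent $F$-isocrystal on the smooth proper geometrically connected variety $X_0$ becomes of finite order after twisting by a constant rank-one object. Concretely, one would argue as follows. Rank-one objects with $\overline{\Q}_p$-coefficients are $\overline{\Q}_p$-valued characters of the Weil group of $X_0$; this uses the unit-root, or Tsuzuki/Crew, correspondence for rank one, possibly after a twist making slopes zero. The restriction to the geometric fundamental group $\pi_1(X_{0,\bar k})^{\mathrm{ab}}$ is then of finite order, because that group's relevant quotient is finite by geometric class field theory on a proper variety. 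A constant twist kills the arithmetic part. I will apply this in the $q$-Frobenius convention and move it through \Cref{lem:frobenius-convention}, which preserves constant objects and finite tensor order. After enlarging to a finite coefficient field $E$ containing $K_0$ and the roots of unity involved, I get $\mathcal D\otimes C$ with $(\mathcal D\otimes C)^{\otimes n}\simeq\mathbf 1$.

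Second, I will identify $\mathcal D':=\mathcal D\otimes C$ with a finite-image character of $\pi_1^{\et}(X_0)$. A finite-order rank-one $F$-isocrystal is unit-root, since its slopes are forced to be zero. By Crew's equivalence, it corresponds to a continuous character $\bar\chi:\pi_1^{\et}(X_0)\to E^\times$ of finite order, hence trivialized on a finite étale cover $Y_0\to X_0$. Because $\mathscr X$ is proper and smooth over $\mathcal O_K$, the specialization map $\pi_1^{\et}(X)\to\pi_1^{\et}(X_0)$ exists. Here I use that finite étale covers of $X_0$ lift uniquely to $\mathscr X$ by Grothendieck's theorem. I define $\chi$ as the pullback of $\bar\chi$ along this map. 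Then $\chi$ has finite image, and it is trivialized on the generic fiber of the finite étale cover $\mathscr Y\to\mathscr X$ lifting $Y_0$.

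Third, I will check that $\chi$ is crystalline, that $M_E(\chi)\simeq\mathcal D'$, and that the filtration sits in degree zero. I expect this step to be the main obstacle, because it requires matching Guo--Reinecke's realization with Crew's equivalence. My plan is to reduce to the trivial case by finite étale descent. On $\mathscr Y$, the local system $\chi|_Y$ is trivial. The trivial local system is crystalline, being the étale realization of the unit prismatic $F$-crystal, with realization the unit $F$-isocrystal with filtration in degree zero. The Galois group $G$ of $\mathscr Y/\mathscr X$ acts through $\chi$. Prismatic $F$-crystals satisfy finite étale descent, so the $\chi$-isotypic descent of $\mathcal O$ with its twisted $G$-action gives an analytic prismatic $F$-crystal on $\mathcal X$ whose étale realization is $\chi$. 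Hence $\chi$ is crystalline by \cite[Theorem A]{GR}. The crystalline realization is compatible with finite étale pullback and descent. Its special-fiber realization is therefore the descent of the unit $F$-isocrystal on $Y_0$ along the character $\bar\chi$. This is exactly the unit-root $F$-isocrystal attached to $\bar\chi$ by Crew, namely $\mathcal D'$. One caveat is that the normalizations of Crew's correspondence and the realization may differ by $\chi\leftrightarrow\chi^{-1}$; if so, I will replace $\bar\chi$ by its inverse. The Hodge filtration is likewise the descent of the trivial filtration, so it is concentrated in degree zero. Finally, I will descend everything to a single finite $E$, using the $2$-inductive-limit convention.
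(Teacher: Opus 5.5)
Your proposal is correct. Its first two steps are the paper's:
\begin{enumerate}[label=\textup{(\arabic*)}]
\item the Abe/Abe--Esnault constant twist makes $\mathcal D\otimes C$ of finite order, hence unit-root;
\item Crew's equivalence turns it into a finite-order character $\chi_0$ of $\pi_1^{\et}(X_0)$, and Grothendieck lifting of the associated Galois cover to $\mathscr Y\to\mathscr X$ defines $\chi$ by specialization.
\end{enumerate}
You skip one detail that the paper spells out. Crew's equivalence is $\Qp$-linear and is applied in the $p$-Frobenius convention. You should therefore note that full faithfulness transports the $E$-action, and that the rank count of \Cref{lem:frobenius-convention} makes the associated representation one-dimensional over $E$. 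This is what makes $\chi_0$ an $E^\times$-valued character.

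You genuinely differ in the last step. The paper pushes forward. Guo--Reinecke's Theorem~B, applied in degree zero to the finite \'etale map $f\colon Y\to X$, shows at once that $f_*E$ is crystalline, with a $G$-equivariant identification $M_E(f_*E)\simeq f_{0,*}\mathbf 1_E$. The rational idempotent $e_{\chi_0}\in E[G]$, together with Crew's compatibility with finite \'etale pushforward, then cuts out $\chi$ on one side and $\mathcal D\otimes C$ on the other. You instead descend. You build the analytic prismatic $F$-crystal of $\chi$ by hand, as the $\chi$-twisted Galois descent of the unit crystal along $\mathcal Y\to\mathcal X$. You get crystallinity from Theorem~A. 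You identify the special-fiber realization through the compatibility of both the crystalline realization and Crew's functor with finite \'etale pullback and descent.

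What your route buys: it avoids the pushforward comparison theorem and produces a crystalline $\mathcal O_E$-lattice directly. It needs no projector and no argument that a rational direct summand of a crystalline local system is crystalline. What it costs: you must check two things that the paper gets from a single citation.
\begin{itemize}
\item Analytic prismatic $F$-crystals form a stack for finite \'etale covers. This reduces to ordinary descent of vector bundles on $\Spec A\setminus V(p,I)$ along finite \'etale maps of prisms $A\to A'$, so it is routine.
\item The \'etale and crystalline realizations commute with \'etale localization, so that the $G$-descent datum is transported correctly.
\end{itemize}
Your $\chi\leftrightarrow\chi^{-1}$ caveat is harmless, as you note.
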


\begin{proof}
Enlarge $E$ to contain $K_0$ and use the $q$-Frobenius convention of \Cref{lem:frobenius-convention}. By
\cite[Lemma 6.1(i)]{Abe18} and \cite[Remark 4.6]{AE},
a constant twist $C$ makes
\[
 \mathcal U:=\mathcal D\otimes_E C,\qquad
 \mathcal U^{\otimes_E n}\simeq\mathbf1_E
\]
for some $n>0$, where $\mathbf1_E$ denotes the tensor unit. Descend this finite diagram to a finite coefficient extension. Finite tensor order forces all Newton slopes to vanish. Since $X_0$ is proper, its convergent and overconvergent $F$-isocrystals agree, and we may regard $\mathcal U$ as a convergent unit-root object.

Return to the $p$-Frobenius convention and apply \cite[Theorem 1.3]{Crew}, first forgetting the $E$-action. Full faithfulness carries the commuting action
\[
 E\longrightarrow\End(\mathcal U)
 \quad\text{to}\quad
 E\longrightarrow\End_{\pi_1^\et(X_0)}(W)
\]
on the associated $\Qp$-representation $W$. Its $\Qp$-dimension is $[E:\Qp]$ by the rank calculation in \Cref{lem:frobenius-convention}; hence $\dim_EW=1$. Tensor compatibility, including tensor products over $E$, now gives a continuous character
\[
 \chi_0:\pi_1^\et(X_0)\longrightarrow E^\times,\qquad \chi_0^n=1.
\]
Thus its image $G$ is finite. Let $f_0:Y_0\to X_0$ be the finite \'etale Galois cover defined by its kernel.

Finite \'etale covers and their morphisms lift uniquely through nilpotent thickenings. The compatible lifts give a finite \'etale cover $\mathcal Y\to\mathcal X$ with $G$-action. The Grothendieck existence theorem\cite[Tag~088C]{Stacks} for the proper scheme $\cX$ algebraizes this finite locally free algebra and its action to a finite \'etale cover $\mathscr Y\to\cX$. Write $f:Y\to X$ for the generic fiber and $\chi$ for the character obtained from $\chi_0$ by specialization.

Let $e_{\chi_0}\in E[G]$ denote the idempotent projector onto the
$\chi_0$-isotypic summand.
The unit-root correspondence is compatible with finite \'etale pushforward:
it identifies that pushforward with induction of representations \cite[Section 1.7]{Crew}.
Consequently,
\[
 e_{\chi_0}(f_*E)=\chi,\qquad
 e_{\chi_0}(f_{0,*}\mathbf1_E)=\mathcal U
\]
in the respective categories. In the second equality we use the Frobenius equivalence fixed above.

The constant local system on $Y$ is crystalline. Applied to the finite \'etale morphism of formal models in degree zero, \cite[Theorem B]{GR} shows that $f_*E$ is crystalline and gives
\[
 M_E(f_*E)\simeq f_{0,*}\mathbf1_E.
\]
Naturality makes this identification $G$-equivariant and compatible with the coefficient action, Frobenius, and filtered flat realizations. Applying the same projector on both sides therefore gives
\[
 M_E(\chi)
 \simeq M_E\bigl(e_{\chi_0}(f_*E)\bigr)
 \simeq e_{\chi_0}(f_{0,*}\mathbf1_E)
 \simeq\mathcal D\otimes_E C.
\]
This also proves crystallinity of the summand. Since $f$ is finite \'etale, the degree-zero relative de Rham realization of $f_*E$ has filtration concentrated in degree zero; its summand $\chi$ has the same property. The projector is taken in $E[G]$, so division by $|G|$ is valid even when $p\mid |G|$. No integral projector is required.
\end{proof}

\paragraph{Period realization of subobjects.}

We work with convergent isocrystals with $E$-action in the
$p$-Frobenius convention, temporarily forgetting Frobenius, and put
\[
  \mathscr B_E(\mathcal N)
  :=\mathbb B_{\mathrm{crys}}(\mathcal N)
    \otimes_{\mathbb B_{\mathrm{crys}}}\mathbb B_{\mathrm{dR}}.
\]
where $\mathbb B_{\mathrm{crys}}$ is the relative crystalline period sheaf and $\mathbb B_{\mathrm{crys}}(\mathcal N)$
is the crystalline period evaluation; see
\cite[Definition~2.19, Definition~2.25]{GR}.

The point of the following lemma is that period evaluation is exact
enough to preserve determinant lines and hence the Pl\"ucker
relations.

\begin{lemma}[Exactness and exterior powers]\label{lem:period-subobjects}
The functor $\mathscr B_E$ is exact and preserves coefficient ranks, tensor products, duals, determinants, and exterior powers. For a crystalline $E$-local system $U$, there is a natural tensor-compatible isomorphism
\begin{equation}\label{eq:horizontal-period}
 \mathscr B_E(M_E(U))\simeq U\otimes_E\mathbb B_{\mathrm{dR},E}.
\end{equation}
For an inclusion $\mathcal N\subset\mathcal M$ of rank $s$, the image of
\[
 \mathscr B_E(\det_E\mathcal N)
 \longrightarrow
 \bigwedge_{\mathbb B_{\mathrm{dR},E}}^s\mathscr B_E(\mathcal M)
\]
is the determinant line of a rank-$s$ subbundle. In particular, it is locally generated by a decomposable vector.
\end{lemma}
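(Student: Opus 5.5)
The plan is to reduce everything to known properties of the relative crystalline period sheaf, working locally on the pro-étale site. For exactness, I would note that $\mathbb B_{\mathrm{crys}}(\mathcal N)$ is, by \cite[Definition~2.25]{GR}, the evaluation of the crystal $\mathcal N$ on the period thickening. That evaluation is computed locally on a small affine $\mathcal U\subset\mathcal X$ with framing as $\mathcal N(\mathcal U)\otimes\mathbb{O}\mathbb B_{\mathrm{crys}}$, followed by taking horizontal sections for the induced connection. Since an isocrystal on a smooth formal scheme is locally free on the rigid generic fiber, and horizontal sections of $\mathbb{O}\mathbb B_{\mathrm{crys}}$-modules with connection of the form $N\otimes\mathbb{O}\mathbb B$ are given by a $\mathbb B_{\mathrm{crys}}$-linear isomorphism (the Poincaré lemma / Taylor-series isomorphism), locally $\mathbb B_{\mathrm{crys}}(\mathcal N)$ is a finite projective $\mathbb B_{\mathrm{crys}}\otimes_{\Qp}E$-module of coefficient rank equal to that of $\mathcal N$. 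The functor is additive and commutes with tensor products because both the evaluation and the horizontal-sections isomorphism are tensor compatible. Base change to $\mathbb B_{\mathrm{dR}}$ is then exact on such locally free objects. Hence $\mathscr B_E$ is exact, rank preserving, tensor compatible, and commutes with duals. Determinants and exterior powers are formed over $E$ as direct summands of tensor powers cut out by antisymmetrizers in $E[S_n]$, so they are preserved too.

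For \eqref{eq:horizontal-period}, I would invoke the crystalline comparison packaged into the definition of a crystalline local system in \cite[Definition~2.31]{GR}. It supplies a $\varphi$-compatible isomorphism $U\otimes_{\Qp}\mathbb B_{\mathrm{crys}}\simeq\mathbb B_{\mathrm{crys}}(M(U))$ of $\mathbb B_{\mathrm{crys}}$-modules, natural in $U$ and compatible with tensor products, since \cite[Theorem A]{GR} is a tensor equivalence onto its image. Naturality carries the $E$-action, and tensoring over $\mathbb B_{\mathrm{crys}}$ with $\mathbb B_{\mathrm{dR}}$ gives the claim, using $U\otimes_{\Qp}\mathbb B_{\mathrm{dR}}\simeq U\otimes_E\mathbb B_{\mathrm{dR},E}$.

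For the subobject claim, let $\mathcal N\subset\mathcal M$ have rank $s$. The quotient $\mathcal M/\mathcal N$ is again an isocrystal, hence locally free. Exactness then shows $\mathscr B_E(\mathcal N)\to\mathscr B_E(\mathcal M)$ is injective with locally free cokernel of rank $r-s$, so the image is a rank-$s$ subbundle $\mathcal S$. Tensor compatibility identifies $\mathscr B_E(\det_E\mathcal N)$ with $\det\mathcal S=\bigwedge^s\mathcal S$, mapping into $\bigwedge^s\mathscr B_E(\mathcal M)$ via the functorial map on exterior powers. This map is injective because $\mathcal S$ is locally a direct summand. Locally, on a basis $s_1,\dots,s_s$ of $\mathcal S$, the line is generated by $s_1\wedge\cdots\wedge s_s$, which is decomposable.

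The main obstacle will be the local-freeness of $\mathbb B_{\mathrm{crys}}(\mathcal N)$ for an arbitrary subisocrystal, which need not come from any local system. The step requiring care is verifying that the horizontal-sections functor in \cite[Definition~2.25]{GR} is exact on all convergent isocrystals, not only on those in the image of $M$, and that the $E$-coefficient splitting is compatible with it. I would first try to derive this from the local Taylor isomorphism $\mathbb{O}\mathbb B_{\mathrm{crys}}\simeq\mathbb B_{\mathrm{crys}}\{X_1,\dots,X_d\}$, under which horizontal sections of $N\otimes\mathbb{O}\mathbb B_{\mathrm{crys}}$ are identified with $N\otimes\mathbb B_{\mathrm{crys}}$.
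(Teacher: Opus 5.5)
Your proposal is correct and follows essentially the paper's argument. Both proofs identify period evaluation locally with scalar extension of the evaluated finite projective module along a period map: you do this via the Taylor isomorphism for $\mathcal O\mathbb B_{\mathrm{crys}}$, and the paper via the crystal property and the map $s\colon R^+\to A_{\mathrm{crys}}(S,S^+)$. Both then use that convergent isocrystals form an abelian category, so the evaluated sequence consists of finite projective modules and splits; both cite Guo--Reinecke for the comparison isomorphism and read off the decomposable generator from a local basis of the subbundle. The obstacle you flag is resolved in the paper exactly as you propose, by deriving exactness from evaluation rather than from horizontal sections of arbitrary connections.
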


\begin{proof}
Here is the local algebra underlying exactness. On a small framed formal open, choose a smooth $W(k)$-model $\Spf R^+$ and put $R=R^+[1/p]$. Such models exist even when $K$ is ramified \cite[Lemmas 2.9--2.10]{GR}. On a sufficiently fine affinoid perfectoid pro-\'etale cover $\widehat U=\operatorname{Spa}(S,S^+)$, adjoining compatible $p$-power roots of the coordinates gives the map
\[
 s:R^+\longrightarrow A_{\mathrm{crys}}(S,S^+)
\]
used in \cite[Construction 2.28]{GR}. The crystal property identifies period evaluation with scalar extension along $s$ and then along $B_{\mathrm{crys}}(S,S^+)\to B_{\mathrm{dR}}(S,S^+)$.

For precision, put
\[
 R_E=R\otimes_{\Qp}E,\qquad
 B_E=B_{\mathrm{dR}}(S,S^+)\otimes_{\Qp}E,
\]
and let $s_E:R_E\to B_E$ be the resulting map. By \cite[Remark~2.3]{Ke}, the category of convergent $p$-Frobenius isocrystals is abelian. Evaluating the inclusion $\mathcal N\hookrightarrow \mathcal M$ at the pro-PD thickening $(R^+,R^+/p)$ gives a short exact sequence
\[
0\longrightarrow N_R\longrightarrow M_R\longrightarrow Q_R\longrightarrow 0
\]
of finite projective $R_E$-modules.
Since $Q_R$ is projective, this sequence splits as a sequence of $R_E$-modules. Thus
\[
 0\longrightarrow N_R\otimes_{R_E,s_E}B_E
 \longrightarrow M_R\otimes_{R_E,s_E}B_E
 \longrightarrow Q_R\otimes_{R_E,s_E}B_E
 \longrightarrow0
\]
is split exact. The crystal transition isomorphisms identify these descriptions for different liftings and maps $s$, so exactness holds for the resulting sheaves.

The same scalar-extension calculation preserves ranks, tensor products over the indicated coefficient rings, duals, and exterior powers. By \cite[Proposition 2.36]{GR}, the evaluation functor also has the description
\[
 \mathscr B_E(\mathcal N)\simeq
 \bigl(\flatreal(\mathcal N)\otimes_{\mathcal O_X}
                 \mathcal O\mathbb B_{\mathrm{dR}}\bigr)^{\nabla=0},
\]
where $\mathcal O\mathbb B_{\mathrm{dR}}$ denotes the
relative structural de Rham period sheaf.

The exactness just proved comes from evaluation, not from right exactness of horizontal sections for arbitrary connections. Crystalline comparison gives \eqref{eq:horizontal-period}; see \cite[Definition 2.31 and Corollary 2.37]{GR}. Finally, locally choose a basis $n_1,\ldots,n_s$ of the subbundle. Its determinant line is generated by $n_1\wedge\cdots\wedge n_s$, so satisfies the Pl\"ucker relations.
\end{proof}

\subsection{Proof of the detection and recovery theorem}

We first record an elementary descent lemma used in the proof of
part~\textup{(ii)}.

\begin{lemma}[Descent of isomorphisms]\label{lem:descent}
Let $V_1,V_2$ be finite-dimensional $L$-representations of a group $H$, and let $F/L$ be a field extension. If $V_1\otimes_LF\simeq V_2\otimes_LF$ as $H$-representations, then $V_1\simeq V_2$.
\end{lemma}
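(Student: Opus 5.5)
The statement is the classical fact that isomorphism of representations descends along field extensions. The plan is to express the existence of an isomorphism as a nonvanishing condition on a polynomial that is defined over $L$, and then to use that $L$ is infinite.

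First, the hypothesis forces $\dim_L V_1=\dim_L V_2=:n$. Consider the $L$-vector space
\[
 W:=\Hom_H(V_1,V_2)\subseteq\Hom_L(V_1,V_2).
\]
It is the solution space of the linear equations $g\phi=\phi g$ for $g\in H$; these are possibly infinitely many, but they cut out a finite-dimensional subspace. Forming solution spaces of linear equations commutes with the flat base change $L\to F$. Hence
\[
 W\otimes_LF=\Hom_H(V_1\otimes_LF,\,V_2\otimes_LF).
\]

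Next, choose $L$-bases of $V_1$, $V_2$ and of $W$, say $\phi_1,\ldots,\phi_m$. The function
\[
 P(x_1,\ldots,x_m)=\det\Bigl(\sum_i x_i\phi_i\Bigr)
\]
is a polynomial with coefficients in $L$. The hypothesis gives an $H$-isomorphism over $F$. That isomorphism lies in $W\otimes_LF$, so it has the form $\sum_i a_i\phi_i$ with $a_i\in F$, and $P(a)\neq0$. Therefore $P$ is not the zero polynomial.

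Since $L$ is infinite (here $L\supseteq\Qp$), a nonzero polynomial over $L$ takes a nonzero value at some point $b\in L^m$. Then $\sum_i b_i\phi_i$ is an $H$-equivariant $L$-linear map $V_1\to V_2$ with nonzero determinant, that is, an isomorphism $V_1\simeq V_2$.

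The only step that needs care is the base-change identity for $\Hom_H$. I would justify it by noting that $W$ is the kernel of the $L$-linear map
\[
 \Hom_L(V_1,V_2)\to\prod_{g\in H}\Hom_L(V_1,V_2),\qquad \phi\mapsto(g\phi-\phi g)_{g\in H}.
\]
Because $\Hom_L(V_1,V_2)$ is finite-dimensional, this kernel is already cut out by finitely many $g\in H$. For a finite family of linear equations, forming the kernel commutes with $\otimes_LF$ by flatness of $F$ over $L$. Equivalently, the rank of a matrix with entries in $L$ is unchanged when the matrix is viewed over $F$.

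No continuity or topology on $H$ is needed, so this lemma applies directly to $H$ in the proof of part (ii).
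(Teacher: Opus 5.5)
Your proposal is correct and follows the paper's proof: you cut out $\Hom_H(V_1,V_2)$ by finitely many intertwining relations to get $\Hom_H(V_1,V_2)\otimes_LF\simeq\Hom_H(V_1\otimes_LF,V_2\otimes_LF)$, note that the determinant polynomial on this space is nonzero, and use that $L$ is infinite to find an invertible $L$-rational intertwiner. Your extra justification of the base-change identity via kernels and flatness fills in the one step the paper states briefly.
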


\begin{proof}
The intertwining relations define linear subspaces of $\Hom_L(V_1,V_2)$. Since this space is finite-dimensional, a finite set of relations has the same common kernel as the full family. Consequently,
\[
 \Hom_H(V_1,V_2)\otimes_LF
 \simeq\Hom_H(V_1\otimes_LF,V_2\otimes_LF).
\]
By assumption, the determinant polynomial does not vanish identically on this space after scalar extension. It therefore does not vanish identically over $L$. Since $L$ is infinite, some $L$-valued intertwiner is invertible.
\end{proof}

\begin{proof}[Proof of \Cref{thm:geometric-detection}]

\emph{Proof of \textup{(i)}.} Set $r=\operatorname{rank}V$.
Suppose that there is a subobject $0\ne\mathcal N\subsetneq M_{\overline{\Q}_p}(V)$ of rank $s$, with $0<s<r$. By \cite[Definition 2.7]{Ke}, properness of $X_0$ identifies this overconvergent subobject with a convergent subisocrystal, so that \Cref{lem:period-subobjects} applies after changing Frobenius convention. Descend it and its inclusion to a finite coefficient extension $E/L$, put $V_E=V\otimes_LE$ and $\mathcal M=M_E(V_E)$, and form
\begin{equation}\label{eq:plucker-map}
 \mathcal D:=\det_E\mathcal N\longrightarrow\bigwedge_E^s\mathcal M.
\end{equation}
We freely enlarge $E$ during the argument. By \Cref{lem:finite-order}, choose $C$ and $\chi$ with $M_E(\chi)\simeq\mathcal D\otimes_E C$. The flat realization $\flatreal(C)$ is the trivial rank-one flat bundle. Tensoring \eqref{eq:plucker-map} by $C$, applying \Cref{lem:period-subobjects}, and trivializing this constant connection gives
\begin{equation}\label{eq:period-plucker}
 \alpha:\chi\otimes_E\mathbb B_{\mathrm{dR},E}
 \longrightarrow
 \bigl(\bigwedge_E^s V_E\bigr)\otimes_E\mathbb B_{\mathrm{dR},E}.
\end{equation}
Its image is locally a line direct summand generated by a decomposable vector. Trivialization of $C$ rescales the line by a unit, so it preserves this property.

We now show that this decomposable period line yields, after a
finite extension of the coefficient field, an actual
$H$-stable $s$-plane in $V_E$. Set
\[
 A=\Hom_H\bigl(\chi_{\bar x},\bigwedge_E^sV_{E,\bar x}\bigr).
\]
Put $\mathcal F=\mathcal Hom_E(\chi,\bigwedge_E^sV_E)$. \Cref{lem:hom-period} sends $\alpha|_{X_{\mathbb C_p}}$ to an element $a\in A\otimes_EB_{\mathrm{dR},E}$. Choose a classical point of $X$, a lift $y:\operatorname{Spa}(\mathbb C_p,\mathcal O_{\mathbb C_p})\to X_{\mathbb C_p}$, and an \'etale path identifying the local-system fibers at $y$ and $\bar x$. Set
\[
 D_y=\Hom_E(\chi_y,\bigwedge_E^sV_{E,y})
               \otimes_E B_{\mathrm{dR},E}.
\]
The evaluation compatibility in \Cref{lem:hom-period} gives the commutative square
\[
\xymatrix{
 A\otimes_E B_{\mathrm{dR},E} \ar[r]^-{\simeq} \ar[d]
&
 H^0(X_{\mathbb C_p,\mathrm{pro\acute et}},
\mathcal F\otimes_E\mathbb B_{\mathrm{dR},E}) \ar[d]^{\mathrm{ev}_y}\\
 D_y \ar[r]^{\mathrm{id}} & D_y.\\}
\]
The left map is induced by the inclusion of geometric invariants and the chosen path. On the right, functoriality of period evaluation pulls $\alpha$ back to its pointwise period realization. By \Cref{lem:period-subobjects} its image is still the determinant of a rank-$s$ direct summand. Thus the image of $a$ in $D_y$ sends a basis of $\chi_y$ to a decomposable generator of a line direct summand.

Since
\[
 B_{\mathrm{dR},E}\simeq
 \prod_{\tau:E\hookrightarrow\BdR}\BdR,
\]
projection to a factor gives a nonzero decomposable element
$a_\tau\in A\otimes_{E,\tau}\BdR$ after choosing a basis of $\chi_{\bar x}$. The nonvanishing follows from preservation of rank-one direct summands under this base change. Here we evaluate at the specified classical point; we do not identify an arbitrary pro-\'etale stalk with the constant field $\BdR$.

Identify $A$ with a subspace of $\bigwedge_E^sV_{E,\bar x}$ using the chosen basis of $\chi_{\bar x}$. The closed projective $E$-scheme
\[
 Z=\mathbb P(A)\cap\operatorname{Gr}(s,V_{E,\bar x})
 \subseteq\mathbb P\bigl(\bigwedge_E^s V_{E,\bar x}\bigr)
\]
has the point $[a_\tau]$ over $\BdR$ and is therefore nonempty. A closed point has residue field a finite extension $E'/E$. It gives a decomposable line in $\bigwedge_{E'}^sV_{E',\bar x}$ on which $H$ acts through $\chi$. The equivariance of the Pl\"ucker embedding makes the corresponding $s$-plane $H$-stable. This contradicts geometric absolute irreducibility. No $E$-rational point of $Z$ is required.

\medskip
\noindent\emph{Proof of \textup{(ii)}.}
Descend the isomorphism and $C$ to a finite $E/L$. After forgetting Frobenius and choosing a basis for the constant connection underlying $C$, we get
\[
 (\flatreal(V_{1,E}),\nabla_1)\simeq
 (\flatreal(V_{2,E}),\nabla_2).
\]
Period realization gives an isomorphism of relative $\mathbb B_{\mathrm{dR},E}$-modules. Apply \Cref{lem:hom-period} to it and its inverse. Compatibility with composition gives mutually inverse elements in the two geometric Hom spaces tensored with $B_{\mathrm{dR},E}$.

Projecting to a factor $\BdR$ gives inverse $H$-equivariant maps after a field extension $E\hookrightarrow\BdR$. \Cref{lem:descent} first descends this isomorphism to $E$, and then to $L$. No irreducibility hypothesis is needed for this assertion.
\end{proof}

\section{Stable lattices and arithmetic twists}
\label{sec:lattices}

We now collect the representation-theoretic input needed to pass
from geometric rational finiteness to the integral statement of
\Cref{thm:main}.  These results are independent of the crystalline
condition.

\begin{proposition}[Finiteness of stable lattices]\label{prop:lattices}
Let $\Gamma$ be a compact topological group, let $H\subseteq\Gamma$ be a closed subgroup, and let $V$ be a finite-dimensional continuous $L$-representation of $\Gamma$. Suppose that $V|_H$ is absolutely irreducible. Then $V$ contains only finitely many homothety classes of $\Gamma$-stable full $\OL$-lattices. Consequently, there are only finitely many isomorphism classes of such lattices as $\OL[\Gamma]$-modules.
\end{proposition}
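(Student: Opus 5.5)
We will use the classical Bruhat--Tits-type argument for lattices in an irreducible representation, adapted to the absolutely irreducible restriction to $H$.

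\emph{Step 1: an order of bounded index.} Fix one $\Gamma$-stable full lattice $T_0\subset V$; one exists because $\Gamma$ is compact and the action is continuous, so the stabilizer of any lattice is open of finite index and summing translates gives a stable lattice. Let $\Lambda\subseteq\End_{\OL}(T_0)$ be the $\OL$-span of the image of $\OL[H]$. Since $V|_H$ is absolutely irreducible, Burnside's theorem gives $\Lambda\otimes_{\OL}L=\End_L(V)$. Hence $\Lambda$ is a full $\OL$-order in $\End_L(V)$, and there is $N\ge0$ with
\[
 \varpi^N\End_{\OL}(T_0)\subseteq\Lambda\subseteq\End_{\OL}(T_0).
\]

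\emph{Step 2: normalizing an arbitrary stable lattice.} Let $T$ be any $\Gamma$-stable full lattice. After multiplying by a power of the uniformizer $\varpi$, we may assume $T\subseteq T_0$ and $T\not\subseteq\varpi T_0$. Pick $v\in T\setminus\varpi T_0$. Since $T$ is $H$-stable, it is a $\Lambda$-module, so $\Lambda v\subseteq T$. Moreover $\End_{\OL}(T_0)v=T_0$, because $v$ is primitive in $T_0$ and can be sent to any element of $T_0$ by an integral endomorphism. Therefore
\[
 \varpi^NT_0=\varpi^N\End_{\OL}(T_0)v\subseteq\Lambda v\subseteq T\subseteq T_0.
\]
Every homothety class thus has a representative between $\varpi^NT_0$ and $T_0$. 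These correspond to submodules of the finite module $T_0/\varpi^NT_0$, so there are finitely many. Finiteness of isomorphism classes of lattices as $\OL[\Gamma]$-modules follows at once, since homothetic lattices are isomorphic.

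\emph{Main obstacle.} The only real point is Step 1, namely that $\Lambda$ has full rank, which is exactly where absolute irreducibility on $H$ is used. One has to note that Burnside's theorem applies to the $L$-span of the image of $H$, a finite-dimensional algebra, and that its $\OL$-form $\Lambda$ is a finitely generated $\OL$-module spanning $\End_L(V)$. Both hold because $\Lambda$ is contained in the finite free module $\End_{\OL}(T_0)$ and $\OL$ is noetherian.
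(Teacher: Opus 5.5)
Your proposal is correct and takes essentially the same route as the paper. Burnside's theorem makes the $\OL$-span of the image of $H$ contain $\varpi^N\End_{\OL}(T_0)$. A vector of a normalized stable lattice that is primitive in $T_0$ then forces $\varpi^N T_0\subseteq T\subseteq T_0$, leaving only finitely many possibilities.
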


\begin{proof}
Compactness of the image of $\Gamma$ gives a stable lattice $\Lambda_0\subset V$. Write $\rho:\Gamma\to\GL_L(V)$ for the given representation. Consider the $\OL$-algebra
\[
 A=\OL[\rho(H)]\subseteq\End_{\OL}(\Lambda_0).
\]
As a submodule of a finite free $\OL$-module, $A$ is finitely generated. Absolute irreducibility and the density theorem give
\[
 A\otimes_{\OL}L=\End_L(V).
\]
Indeed, this equality holds after extension to an algebraic closure by Burnside's theorem, and then descends by dimension. Let $\varpi$ be the uniformizer of $L$. Thus, for some $c\geq0$,
\begin{equation}\label{eq:order}
 \varpi^c\End_{\OL}(\Lambda_0)\subseteq A.
\end{equation}

Let $\Lambda$ be any $\Gamma$-stable full lattice. Multiplication by a power of $\varpi$ normalizes it so that
\[
 \Lambda\subseteq\Lambda_0,\qquad
 \Lambda\not\subseteq\varpi\Lambda_0.
\]
Choose $v\in\Lambda\setminus\varpi\Lambda_0$. Since $v$ is primitive in $\Lambda_0$, for every $w\in\Lambda_0$ there exists $u\in\End_{\OL}(\Lambda_0)$ with $u(v)=w$. The lattice $\Lambda$ is $A$-stable, so \eqref{eq:order} implies
\[
 \varpi^c w=(\varpi^c u)(v)\in\Lambda.
\]
Every normalized lattice therefore satisfies
\begin{equation}\label{eq:sandwich}
 \varpi^c\Lambda_0\subseteq\Lambda\subseteq\Lambda_0.
\end{equation}
There are only finitely many such lattices, since the quotient $\Lambda_0/\varpi^c\Lambda_0$ is finite. Homothetic lattices are isomorphic as $\OL[\Gamma]$-modules, proving the final assertion.
\end{proof}

\begin{lemma}[Arithmetic twists]\label{lem:twists}
Suppose that $H$ is normal in $\Gamma$ and that $V_1,V_2$ are continuous $L$-representations with absolutely irreducible, isomorphic restrictions to $H$. Then
\[
 V_2\simeq V_1\otimes\chi
\]
for a unique continuous character $\chi:\Gamma/H\to L^\times$. If $\Gamma$ is compact, then $\chi$ takes values in $\OL^\times$.
\end{lemma}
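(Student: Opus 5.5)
We prove \Cref{lem:twists} with the standard Schur-lemma argument applied to the space of $H$-intertwiners. Fix an $H$-isomorphism $\phi_0:V_1|_H\xrightarrow{\sim}V_2|_H$. Set
\[
 I=\Hom_H(V_1,V_2).
\]
Absolute irreducibility of $V_1|_H$ gives $\End_H(V_1)=L$: after extending to $\overline{\Q}_p$, Schur's lemma makes the commutant scalar, and this descends by dimension as in the proof of \Cref{lem:descent}. Composing with $\phi_0^{-1}$ identifies $I$ with $\End_H(V_1)$, so $\dim_L I=1$ with basis $\phi_0$.

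Next we use normality of $H$ to make $\Gamma$ act on $I$ by $(g\cdot\phi)=\rho_2(g)\circ\phi\circ\rho_1(g)^{-1}$. For $h\in H$ we have $ghg^{-1}\in H$, so $g\cdot\phi$ is again $H$-equivariant. The action is therefore well defined on the line $I$, and it is given by a character $\chi:\Gamma\to L^\times$ with $g\cdot\phi_0=\chi(g)\phi_0$. Since $\phi_0$ commutes with $H$, the subgroup $H$ acts trivially, so $\chi$ factors through $\Gamma/H$. Continuity holds because $\chi(g)$ can be read off from a single matrix coefficient of the continuous map $g\mapsto\rho_2(g)\phi_0\rho_1(g)^{-1}$.

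Rewriting $g\cdot\phi_0=\chi(g)\phi_0$ gives $\rho_2(g)\phi_0=\chi(g)\phi_0\rho_1(g)$. Thus $\phi_0$ is a $\Gamma$-isomorphism $V_1\otimes\chi\xrightarrow{\sim}V_2$.

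For uniqueness, suppose $V_1\otimes\chi\simeq V_1\otimes\chi'$ via some $\psi$. Restricting to $H$, where both characters are trivial, shows $\psi\in\End_H(V_1)=L$, so $\psi$ is a nonzero scalar. Comparing the two $\Gamma$-actions through this scalar forces $\chi=\chi'$.

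Finally, if $\Gamma$ is compact then $\chi(\Gamma)$ is a compact subgroup of $L^\times$. The valuation sends it to a compact subgroup of $\mathbb Z$, which must be $0$, so $\chi(\Gamma)\subseteq\OL^\times$.

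The only point requiring care is $\End_H(V_1)=L$ over the non-closed field $L$. It is handled by the same finite-dimensional base-change argument used in \Cref{lem:descent}: $\End_H(V_1)\otimes_L\overline{\Q}_p=\End_H(V_1\otimes_L\overline{\Q}_p)$, and the right-hand side is one-dimensional by Schur's lemma.
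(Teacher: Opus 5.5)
Your proof is correct and follows the paper's argument. The paper also takes the one-dimensional space $\Hom_H(V_1,V_2)$, lets $\Gamma$ act on it by conjugation (well defined because $H$ is normal), reads off $\chi$ from this action, and gets integrality from the valuation. You add details the paper leaves implicit, namely $\End_H(V_1)=L$ via Schur's lemma and base change, and an explicit uniqueness argument using the scalar intertwiner $V_1\otimes\chi\simeq V_1\otimes\chi'$.
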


\begin{proof} Write $\rho_i$ for the action on $V_i$.
The space $W=\Hom_H(V_1,V_2)$ is one-dimensional over $L$. Normality of $H$ makes $W$ stable under the action
\[
 \gamma\cdot A=\rho_2(\gamma)A\rho_1(\gamma)^{-1}.
\]
This action is continuous and trivial on $H$, hence defines a character $\chi$ of $\Gamma/H$. A nonzero element $A\in W$ is invertible and satisfies
\[
 \rho_2(\gamma)A=\chi(\gamma)A\rho_1(\gamma),
\]
which gives the desired isomorphism. The action on $W$ also proves uniqueness. Finally, the valuation of the compact subgroup $\chi(\Gamma/H)\subset L^\times$ is a finite subgroup of $\mathbb Z$, hence zero.
\end{proof}

\begin{corollary}[Integral arithmetic twists]\label{cor:integral-twists}
Let $T_1,T_2$ be $\OL$-local systems whose rationalizations have absolutely irreducible restrictions to $H$. Then $T_1|_H\simeq T_2|_H$ if and only if $T_2\simeq T_1\otimes\chi$ for a continuous character $\chi:G_K\to\OL^\times$.
\end{corollary}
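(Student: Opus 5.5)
The "if" direction is immediate. The character $\chi$ factors through $G_K=\Gamma/H$, so it is trivial on $H$ and $(T_1\otimes\chi)|_H=T_1|_H$. The substance lies in the converse, where we must pass from a rational twist statement to an integral one. The plan is to run \Cref{lem:twists} rationally and then descend to the lattices using the one-dimensionality of the geometric Hom space.

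Assume $T_1|_H\simeq T_2|_H$ over $\OL$ and put $V_i=T_i\otimes_{\OL}L$. Then $V_1|_H\simeq V_2|_H$, and both restrictions are absolutely irreducible by hypothesis. We take $H$ normal in $\Gamma$ via \eqref{eq:fundamental}, and $\Gamma$ is profinite, hence compact. \Cref{lem:twists} then gives a unique continuous character $\chi:\Gamma/H=G_K\to\OL^\times$ with $V_2\simeq V_1\otimes\chi$. It remains to see that some such isomorphism carries $T_1\otimes\chi$ onto $T_2$.

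For this I will work with $W=\Hom_H(V_1,V_2)$, which is one-dimensional over $L$ by Schur's lemma together with absolute irreducibility. Set $W^\circ=\Hom_{\OL[H]}(T_1,T_2)=W\cap\Hom_{\OL}(T_1,T_2)$. This is a rank-one $\OL$-lattice in $W$, say $W^\circ=\OL A_0$. It contains the given $H$-isomorphism $\phi:T_1\xrightarrow{\sim}T_2$, so $\phi=uA_0$ for some $u\in\OL$. Since $\phi$ is invertible over $\OL$, $u$ must be a unit. Hence $A_0$ is itself an $\OL$-isomorphism $T_1\to T_2$.

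Now $\Gamma$ acts on $W$ by $\gamma\cdot A=\rho_2(\gamma)A\rho_1(\gamma)^{-1}$, and this action preserves $W^\circ$ because the $T_i$ are $\Gamma$-stable. So $\gamma\cdot A_0=\chi(\gamma)A_0$ with $\chi(\gamma)\in\OL^\times$, which is the same character produced by \Cref{lem:twists}. The relation $\rho_2(\gamma)A_0=\chi(\gamma)A_0\rho_1(\gamma)$ says exactly that $A_0:T_1\otimes\chi\to T_2$ is $\Gamma$-equivariant, so it is an isomorphism of $\OL$-local systems.

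The only step requiring care is the integrality argument: showing that a generator of the rank-one lattice of integral intertwiners is an isomorphism. This comes down to the unit argument using the given integral isomorphism $\phi$, after which everything is formal.
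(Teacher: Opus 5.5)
Your proof is correct and follows essentially the paper's argument: apply \Cref{lem:twists} to the rationalizations, then check that the twisting relation $\rho_2(\gamma)A=\chi(\gamma)A\rho_1(\gamma)$ already holds for an integral $H$-isomorphism. Your passage to a generator $A_0$ of $\Hom_{\OL[H]}(T_1,T_2)$ is a harmless detour, since $A_0=u^{-1}\phi$; the paper applies the relation directly to $\phi$ and gets $\chi(\gamma)\in\OL^\times$ from the fact that both $\rho_2(\gamma)\phi\rho_1(\gamma)^{-1}$ and $\phi$ map $T_1$ onto $T_2$, whereas you get it from compactness of $\Gamma$ or from $\Gamma$-stability of the lattice of integral intertwiners.
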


\begin{proof}
Only the forward implication needs proof. Choose an integral $H$-equivariant isomorphism $A:T_1\to T_2$. By \Cref{lem:twists}, its rationalization satisfies
\[
 \rho_2(\gamma)A\rho_1(\gamma)^{-1}=\chi(\gamma)A
\]
for a continuous arithmetic character. Both the left-hand side and $A$ identify $T_1$ with $T_2$, so $\chi(\gamma)T_2=T_2$ and $\chi(\gamma)\in\OL^\times$. The same identity gives the integral twisted isomorphism.
\end{proof}

\begin{proposition}[Passage from rational to integral finiteness]\label{prop:integral}
Let $\mathcal S$ be a set of $\OL$-local systems on $X$ whose rationalizations are geometrically absolutely irreducible. If these rationalizations have finitely many isomorphism classes modulo continuous $L^\times$-valued characters of $G_K$, then $\mathcal S$ has finitely many isomorphism classes modulo continuous $\OL^\times$-valued characters of $G_K$.
\end{proposition}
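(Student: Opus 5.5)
The plan is to reduce to a single rational isomorphism class and then apply \Cref{prop:lattices} together with \Cref{cor:integral-twists}. By hypothesis, choose finitely many members $T^{(1)},\dots,T^{(m)}\in\mathcal S$ whose rationalizations $V^{(i)}=T^{(i)}\otimes_{\OL}L$ represent every class modulo continuous $L^\times$-valued characters of $G_K$. For an arbitrary $T\in\mathcal S$ there is then an index $i$ and a continuous character $\psi:G_K\to L^\times$ with $T\otimes_{\OL}L\simeq V^{(i)}\otimes\psi$. Since $G_K$ is compact, the last step of \Cref{lem:twists} shows that $\psi$ already takes values in $\OL^\times$. It is therefore harmless to replace $T$ by $T\otimes\psi^{-1}$, which is again an $\OL$-local system. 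After this replacement we have $T\otimes_{\OL}L\simeq V^{(i)}$ as $\Gamma$-representations.

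Transporting $T$ along this rational isomorphism turns it into a $\Gamma$-stable full $\OL$-lattice in $V^{(i)}$. Here we view local systems as continuous representations of the profinite, hence compact, group $\Gamma=\pi_1^{\et}(X,\bar x)$. The restriction $V^{(i)}|_H$ is absolutely irreducible, since geometric absolute irreducibility is exactly irreducibility over $\overline{\Q}_p$ on $H$. Hence \Cref{prop:lattices} applies: up to isomorphism of $\OL[\Gamma]$-modules there are only finitely many such lattices in each $V^{(i)}$. Taking the union over $i=1,\dots,m$, we conclude that every $T\in\mathcal S$ is isomorphic, after twisting by an $\OL^\times$-valued character of $G_K$, to one of finitely many $\OL$-local systems. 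This is the claimed finiteness.

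The only point that needs care is integrality of the twisting character. The given equivalence allows $L^\times$-valued characters, and twisting an integral local system by a non-integral character would not produce an $\OL$-local system. Compactness of $G_K$ settles this, because a compact subgroup of $L^\times$ lies in $\OL^\times$. A second point to check is that two lattices giving the same isomorphism class of $\OL[\Gamma]$-modules give isomorphic local systems. This holds through the standard equivalence between $\OL$-local systems on the connected scheme $X$ and continuous $\OL[\Gamma]$-modules. If one prefers the geometric form of the statement, \Cref{cor:integral-twists} converts the conclusion into finiteness of classes under $T_1|_H\simeq T_2|_H$ over $\OL$.
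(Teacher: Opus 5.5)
Your proof is correct and follows the paper's argument. Compactness of $G_K$ makes the twisting character $\OL^\times$-valued, so $T\otimes\psi^{-1}$ becomes a $\Gamma$-stable lattice in one of finitely many rational representatives, and \Cref{prop:lattices} finishes the proof; choosing those representatives among rationalizations of members of $\mathcal S$ is a harmless refinement that makes the absolute-irreducibility hypothesis of \Cref{prop:lattices} immediate.
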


\begin{proof}
Choose rational representatives $V_1,\ldots,V_m$. For every $T\in\mathcal S$ there are $j$ and a continuous character $\chi:G_K\to L^\times$ with
\[
 T\otimes_{\OL}L\simeq V_j\otimes\chi.
\]
Compactness of $G_K$ implies that $\chi$ is $\OL^\times$-valued. Thus $T\otimes\chi^{-1}$ is a stable lattice in $V_j$. Apply \Cref{prop:lattices} to each of the finitely many $V_j$.
\end{proof}

\section{Finiteness}
\label{sec:finiteness}

We now combine \Cref{thm:geometric-detection} with the finiteness
theorem of Abe--Esnault and the representation-theoretic results of
\Cref{sec:lattices}.
We use \cite[Corollary 4.3]{AE} in the coefficient category specified in \Cref{sec:realizations}. On the proper variety $X_0$, the boundary is empty and the ramification bound can be zero. Thus irreducible objects of rank $r$ have finitely many isomorphism classes modulo constant rank-one twists.

\begin{proof}[Proof of \Cref{thm:main}]
\Cref{thm:geometric-detection}\textup{(i)} places all the special-fiber realizations under consideration among these irreducible rank-$r$ objects. For each twist class that actually occurs, choose a crystalline $L$-local system $V_j$ realizing it. Every $V$ under consideration then satisfies
\[
 M_{\overline{\Q}_p}(V)\simeq
 M_{\overline{\Q}_p}(V_j)\otimes C
\]
for some $j$ and some constant rank-one $C$. \Cref{thm:geometric-detection}\textup{(ii)} gives $V|_H\simeq V_j|_H$ over $L$. \Cref{lem:twists} yields $V\simeq V_j\otimes\chi$ for a continuous arithmetic character. \Cref{prop:integral} now gives integral finiteness, and \Cref{cor:integral-twists} identifies the two equivalence relations in the statement. The representatives are chosen from the original set, so no descent of an arbitrary Abe--Esnault representative to $L$ is assumed.
\end{proof}

\begin{proof}[Proof of \Cref{cor:fixed-det}]
By \Cref{thm:main}, there are only finitely many arithmetic-twist
classes.  Choose one representative $T_j$ from each class meeting
the fixed-determinant locus.  If $T\simeq T_j\otimes\chi$ and
$\det T\simeq\det T_j$, then $\chi^r=1$.  Thus $\chi$ takes
values in the finite group $\mu_r(L)$ of $r$-th roots of unity in $L$.  By local class field theory,
\[
  \Hom_{\mathrm{cont}}(G_K,\mu_r(L))
\]
is finite, equivalently because $K^\times/(K^\times)^r$ is finite.
Hence each arithmetic-twist class contains only finitely many objects
with the prescribed determinant.
\end{proof}

\begin{remark}
The determinant twist $C$ in \Cref{lem:finite-order} and the
arithmetic character appearing in the proof of \Cref{thm:main}
play different roles.
Only the finite-order object $\mathcal D\otimes C$ is realized as a local system; the constant object $C$ is used through its trivial underlying connection. The arithmetic twist between $V$ and $V_j$ is obtained later from $\Hom_H(V_j,V)$.
\end{remark}

\begin{remark}[Why geometric irreducibility is essential]
\label{rem:filtrations}
Geometric irreducibility cannot simply be omitted from
\Cref{thm:main}: even a fixed underlying $F$-isocrystal may admit
infinitely many admissible filtrations.
Indeed, let $D=\bigoplus_{i=1}^4\Qp e_i$ with
$\varphi(e_i)=p^{2(i-1)}e_i$, and consider filtrations with graded
degrees $-1,1,5,7$.  For flags in sufficiently general position,
one has $t_H(D')<t_N(D')$ for every nonzero proper
$\varphi$-stable subspace $D'\subset D$, while
$t_H(D)=t_N(D)=12$ with the usual Hodge and Newton numbers $t_H$ and $t_N$.
Hence these filtrations are weakly admissible
and give absolutely irreducible crystalline representations
by the Colmez--Fontaine theorem \cite{CF}.
They vary in a nonempty open subset of the six-dimensional full flag
variety, whereas $\Aut_\varphi(D)=(\mathbb G_m)^4$ has effective
orbit dimension at most $3$; thus infinitely many isomorphism
classes have the same underlying $\varphi$-module.  They remain
distinct modulo character twists, since a twist preserving both
$\{-1,1,5,7\}$ and $\{1,p^2,p^4,p^6\}$ must be trivial.
Thus the special-fiber $F$-isocrystal alone does not determine the
crystalline representation, and geometric irreducibility is the
additional input that makes the comparison argument above effective.
In particular, for $X=\Spec\Qp$ one has
$\pi_1^{\et}(X)=G_{\Qp}$ while
$\pi_1^{\et}(X_{\overline{\Q}_p})=1$.
The representations above therefore give infinitely many
crystalline local systems, even modulo character twists, once the
geometric irreducibility hypothesis is removed.
\end{remark}

\end{document}